\DeclareMathOperator*{\argmax}{arg\,max}
\newtheorem{theorem}{Theorem}[section]
\newtheorem{lemma}{Lemma}
\newtheorem{assumption}{Assumption}[section]
\newtheorem{problem}{Problem}[section]
\newtheorem{definition}{Definition}[section]
\newtheorem{remark}{Remark}[section]
\newcommand{\N}{\mathbb{N}}
\newcommand{\R}{\mathbb{R}}
\def\ps@pprintTitle{%
   \let\@oddhead\@empty
   \let\@evenhead\@empty
   \let\@oddfoot\@empty
   \let\@evenfoot\@oddfoot
}
\begin{document}

\begin{frontmatter}

\title{Optimal and sustainable management of a soilborne banana pest}

\author[ngoa,inria]{Isra\"el {Tankam-Chedjou}\corref{mycorrespondingauthor}}
\cortext[mycorrespondingauthor]{Corresponding author}
\ead{israeltankam@gmail.com}
\author[inria]{Fr\'ed\'eric Grognard}
\author[ngoa,polytech]{Jean Jules Tewa}
\author[inria,inra]{Suzanne Touzeau}

\address[ngoa]{Department of Mathematics, University of Yaound\'e I, Cameroon}
\address[inria]{Universit\'e C\^ote d'Azur, Inria, INRAE, CNRS, Sorbonne Universit\'e, BIOCORE, France}
\address[inra]{Universit\'e C\^ote d'Azur, INRAE, CNRS, ISA, France}
\address[polytech]{National Advanced School of Engineering, University of Yaound\'e I, Cameroon}

\begin{abstract}
In this paper we propose an eco-friendly optimization of banana or plantain yield by the control of the pest burrowing nematode \textit{Radopholus similis}. This control relies on fallow deployment, with greater respect for the environment than chemical methods. The optimization is based on a multi-seasonal model in which fallow periods follow cropping seasons. The aim is to find the best way, in terms of profit, to allocate the durations of fallow periods between the cropping seasons, over a fixed time horizon spanning several seasons. The existence of an optimal allocation is proven and an adaptive random search algorithm is proposed to solve the optimization problem. For a relatively long time horizon, deploying one season less than the maximum possible number of cropping seasons allows to increase the fallow period durations and results in a better multi-seasonal profit. For regular fallow durations, the profit is lower than the optimal solution, but the final soil infestation is also lower.
\end{abstract}

\begin{keyword}
semi-discrete model \sep epidemiological model \sep yield optimization \sep pest management \sep burrowing nematode
\end{keyword}

\end{frontmatter}

\section{Introduction}
Crop pests attack cultivated plants or stored crops, causing serious economic damage to the detriment of farmers and threatening food security \cite{Fiala92,Agrios88}. Crop losses to pests are difficult to quantify. Nonetheless, it was estimated that 20 \% to 30 \% of major crop yields were lost because of pests, principally in food-deficit areas \cite{Savary19}, representing 2,000 billion dollars per year \cite{Pimentel09}. Pesticides are still widely used in agriculture: in 2009, almost $3 \times 10^9$ kg of pesticides were used throughout the world, at a cost of nearly 40 billion dollars \cite{Hussain09}. Yet, risks associated with pesticide use have surpassed their beneficial effects, as pesticides have drastic effects on non-target species and hence affect biodiversity, aquatic as well as terrestrial food webs and ecosystems \cite{Mahmood16}. Therefore, the problem of pest control has necessarily to be addressed in an integrated manner, which has motivated the development of alternative environmentally agricultural practices \cite{Nickel73}.

Soilborne pests have a prominent place among plant pathogens. They include fungi, oomycetes, viruses (carried by nematodes or other organisms) and parasitic plants, but also and above all nematodes which promote the infestation of plants by other pathogens \cite{Back02}. The burrowing nematode, \textit{Radopholus similis} [(Cobb, 1893) Thorne, 1949] is a migratory plant parasitic nematode that attacks over 1200 plant species on which it causes severe economic losses in yields \cite{Sarah99,Duncan05} Including banana. Banana is a major staple crop in the tropics and subtropics, and one of the most popular fruits in the world \cite{Fao17, Plowright13}. Collectively called banana, banana and cooking banana, usually named "plantain", are grown in more than 135 countries and are found in most tropical and subtropical regions of the world. There are non-seasonal crops that provide a source of food all year round, making them vital for nutrition and food security. In addition to its export value, banana plantations and small plantain farms are an important source of employment \cite{Temple96,Okolle09}. In Ivory Coast, some studies reported that \textit{R. Similis} was causing average yield reductions of 80\% in banana plantations \cite{vilardebo74}. Additionally, high overall yield losses of 60\% on plantain production in Cameroon were recorded \cite{Fogain00}. When \textit{R. Similis} attacks banana plants, the risks of toppling or heavy over-infestation of the plants are so high that they often lead to stopping growing bananas \cite{Nkendah03,Coyne18}, which impacts on farmers' returns. Fighting this pest therefore represents a major challenge in tropical areas. 
However, as with other plant pathogens, the control of \textit{R. similis} still requires pesticides which are not always very effective and pollute the environment \cite{Gowen97}. Fortunately, some alternative practices such as flooding and fallowing are carried out to reduce the impact of soilborne pathogens like \textit{R. similis} \cite{Chabrier03}. Fallows in particular appear to be a sustainable control method, as nematodes undergo a fast decay in the soil \cite{Chabrier03,Chabrier05,Chabrier10}. In this paper, we aim to assess and optimize the efficiency of the deployment of fallow via a mathematical model.

Mathematical modelling and computer simulation are becoming major tools for the study of the evolution of plant epidemics and optimization of pest control. Concerning soilborne pathogens, Gilligan \cite{Gilligan90}, followed by Gilligan and Kleczkowski \cite{Gilligan97} have proposed some mathematical models in the 90s. Madden and Van den Bosch \cite{Madden02} and Mailleret et al. \cite{Mailleret11} introduced the semi-discrete formalism in soilborne pathogen models to obtain multi-seasonal dynamics of crop-pathogen interactions. Following these works, we proposed a multi-seasonal model for the dynamics of banana or plantain crops in interaction with the burrowing nematode \textit{R. similis}, with fallow periods following cropping seasons \cite{TankamChedjou20}. The nematode basic reproduction number was computed for this model, and we showed that for fallow periods longer than a certain threshold, the pest population declines. However, always deploying longer fallows may not be optimal in terms of yield. Indeed, on a finite time horizon, longer fallow periods may imply less cropping seasons. The aim of this paper is therefore to optimize the duration of fallow periods in order to maximize the profit of banana crop on a fixed time horizon. 

Optimization models, in which a host plant cropping is alternated with either an off-season, a non-host cropping or a poor host, exist in the mathematical modelling literature. Van den Berg et al.\ \cite{Berg06,Berg12}, for instance, rely on an extended Ricker model to optimize potato yield losses due to the potato cyst nematode by rotating different potato cultivars.   Taylor and Rodr\`iguez-K\'abana  optimize the economical yield of peanut crops by rotating peanuts (good host) and cotton (bad host) in order to control the peanut root-knot nematode \textit{Meloidogyne arenaria} \cite{Taylor99}. Nilusmas et al. provide optimal rotation strategies between susceptible and resistant crops to control root-knot nematodes and maximize tomato crop yields \cite{Nilusmas20}. Van den Berg and Rossing design optimal rotation strategies between host and non-host crops or fallows over several yearly cycles in order to manage crop losses due to the root lesion nematode \textit{Pratylenchus penetrans}, based on a fairly generic model  \cite{Berg05}. Strategies consist in deciding whether or not to deploy a one-year fallow. It highly differs from our approach, which in a non-seasonal context aims at optimizing the fallow durations on a given time horizon.

 In Section~\ref{Section_Modelling} we describe the model and its parameter, and we define the yield and the profit. In Section~\ref{Section_Optimization} we state the optimization problem and describe the optimization algorithm. In Section~\ref{Section_Results} we provide the solution of the optimization problem on short and long time horizons. We also seek more regular solutions by bounding fallow durations, penalizing extreme durations or setting constant durations. In Section~\ref{Section_discussion} we discuss our results and present possible extensions.

\section{Modelling}\label{Section_Modelling}

\subsection{Plant-nematode interaction model}

Banana is a perennial herbaceous plant widely cultivated in the tropical and subtropical regions. As a non-seasonal crop, bananas are available fresh year-round. It is perennial because it produces succeeding generations of crops. The plant propagates itself by producing suckers which are outgrowths of the vegetative buds set on the rhizome during leaf formation and which share their parent rhizome during their  early development \cite{Eckstein99}. Hence, infested parent plants lead to infested suckers \cite{Eckstein99,Duncan90}. In order to avoid this direct transmission of pests, an alternative reproduction method can be proposed: after the harvest of the bunch, the banana plant is uprooted and a new healthy vitro-plant is planted, usually after the fallow \cite{Chabrier03}. The young sucker produces roots continuously until the flowering \cite{Beugnon66}; then absorbed nutrients are essentially used for the growth of the fruit bunch. 

The nematode \textit{Radopholus similis} is an obligate parasite that feeds on banana roots. It penetrates the banana root, travels and feeds on the root cortex. \textit{R. similis} directly destroys cells and also facilitates the entry and development of saprophagous and secondary parasite \cite{Blake66,Loridat89} inducing root necrosis \cite{Hugon88}. It mostly reproduces sexually, even though females can use parthenogenesis if males are lacking \cite{Kaplan00}. Fertilized females lay about five eggs daily over their gravidity period which can last 2 weeks \cite{Marin98}. From these eggs, young larvae emerge, which can either remain in the root or end up in the soil in search of new roots to colonize \cite{Hugon88}. In general, when hosts are present, very few of \textit{R. similis} are found in the soil whereas higher densities are found in roots and rhizomes \cite{Araya95}. 

The model we study here is based on previous works \cite{Tankam18,TankamChedjou20}. It considers a compartment $P$ for the population of free nematodes in the soil, a compartment $X$ for the population of infesting nematodes in the roots and a compartment $S$ for fresh roots biomass in grams. We name $t_k$ the starting point of the $(k + 1)$-th season and we set $t_0 = 0$ the starting point of the first season. We consider an initial infestation $P(0^+) = P_0 \ge 0$, and assume that the new suckers planted at the beginning of each cropping season have the same root biomass and are nematode-free, such that $S(t_k^+)=S_0$ and $X(t_k^+) =0$ for all $k\ge0$; the superscript ``+'' stands for the instant that directly follows. 

We work at the scale of a single plant. The dynamics of the interaction between nematodes and plant roots during the cropping seasons is given by the following equation for $t \in (t_k,t_k+D]$: 

\begin{equation} \label{Cropping1_simple}
    \left\{\begin{aligned}
     \dot{P} &= -\beta P S + \alpha a (1 - \gamma) \frac{S X}{S + \Delta} - \omega P,\\
     \dot{S} &= \rho(t) S \Big(1-\frac{S}{K}\Big) - a \frac{ SX}{S + \Delta}, \\
     \dot{X} &= \beta P S + \alpha a \gamma \frac{S X}{S + \Delta} - \mu X;
\end{aligned}\right.
\end{equation}
where $\beta$ is the infestation rate of free nematodes ($P$), $a$ is the consumption rate of infesting nematodes ($X$) on fresh roots ($S$), $\alpha$ is the conversion rate of ingested roots, $\gamma$ is a proportion of nematodes laid inside, $\mu$ and $\omega$ are mortality rates, $\Delta$ is the half-saturation constant, $\rho(t)$ is the logistical growth of roots. If we name $d$ the duration between the beginning of the cropping season and the flowering of the plant, and $D$ the duration between the flowering and the harvest, then $\rho(t)$ takes the form:

    \begin{equation*}
     \rho(t) =
    \begin{cases}
    \rho &\text{for } t \in (t_n,t_n + d], \\
    0  &\text{for } t \in (t_n + d, t_n + D].
\end{cases} 
\end{equation*}

In the following, we will term, when needed, the dynamics of (\ref{Cropping1_simple}) during the $(t_k,t_k+d]$ intervals ``the first subsystem of (\ref{Cropping1_simple})'' while ``the second subsystem of (\ref{Cropping1_simple})'' will concern  $(t_k+d,t_k+D]$, with $\rho=0$. 

At the end of a cropping season, i.e.\ at $t = t_k+D$, the plant is uprooted and the uprooting cannot be perfect. Hence we assume that a fraction $q$ of infesting nematodes remains in the soil in addition to the free nematodes inherited from the cropping season. That is traduced by the switching $P(t_{k}+D^+) = P(t_{k}+D) +qX(t_{k}+D)$. 

The only dynamics that remains is the dynamics of free nematodes that undergo an exponential decay \cite{Chabrier10} during a fallow of length $\tau_{k+1}$ until the beginning $t_{k+1}^+$ of the next cropping season. We therefore have the following switching rule between seasons: 

\begin{equation} \label{switch_new_season}
    \left\{\begin{aligned}
    P(t_{k+1}^+) &= \Big( P(t_{k}+D) + q X(t_{k}+D)\Big)e^{-\omega\,\tau_{k+1}},\\
    S(t_{k+1}^+) &= S_0, \\
    X(t_{k+1}^+) &= 0,
\end{aligned}\right.
\end{equation} 

Equations (\ref{Cropping1_simple}) and (\ref{switch_new_season}) form our multi-seasonal model for the dynamics of banana-nematodes interactions with a distribution $(\tau_{k+1})_{k \ge 0}$ of fallow periods. 

The diagram in Figure~\ref{diagramme} summarizes the described multi-seasonal dynamics.

\begin{figure}[htp]
   \centering
    \includegraphics[width=.85\linewidth]{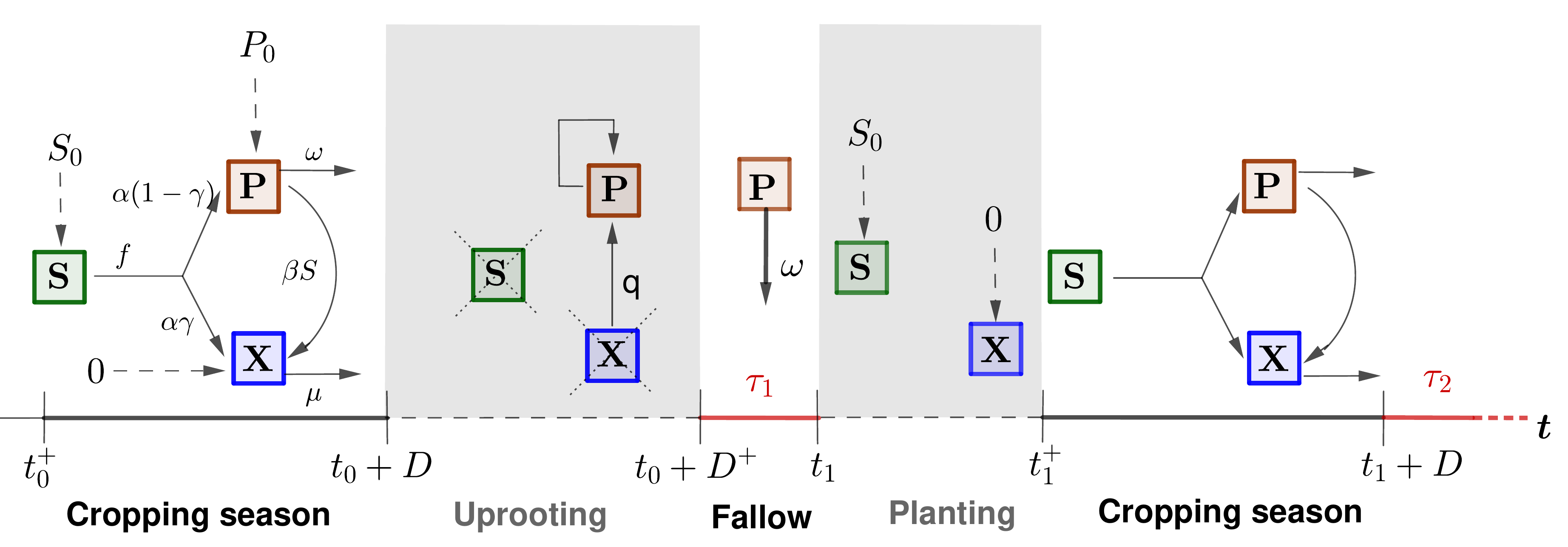}
    \caption{Diagram of system (\ref{Cropping1_simple},\ref{switch_new_season}) during two cropping seasons. Cropping seasons are followed by fallow periods (duration $\tau_i$), during which free pests ($P$) decay exponentially at rate $\omega$. The time runs continuously during the cropping seasons and the fallow periods and is represented by solid lines on the time axis. The discrete phenomena, that are planting and uprooting, are represented by dotted lines. When a new pest-free sucker is planted at $t_i^+$, the fresh root biomass is initialized at a constant weight $S_0$, infesting pests at $0$, while the free pest population remains the same. When the plant is uprooted at $t_i+D^+$, a proportion $q$ of infesting pests turns into free pests. }
    \label{diagramme}
\end{figure}
   
It has been shown that the model (\ref{Cropping1_simple},\ref{switch_new_season}) is well-posed \cite{TankamChedjou20}. We now define what is the economical profit that can emerge from this model.

\subsection{Yield and profit}

Banana roots are responsible of the absorption of nutrients. After the flowering, these nutrients are mainly used for the growth of the banana bunch. If the economic yield of a bunch depends on its weight, then this yield is related to the biomass of fresh roots during the bunch's growth period. We make the hypothesis that a season is profitable only if it is complete. A metric to capture the yield of the $(k+1)$-th cropping season from the model (\ref{Cropping1_simple},\ref{switch_new_season}) can therefore be given by the following formula: 

\begin{equation*}
Y_{k}  = \int_{t_{k}+d}^{t_{k}+D}W(t)\,S(t)\,dt,
\end{equation*}
where $W(t)$ is a weighting function \cite{Hall07}. 

From reference \cite{Serrano05}, the weighting function $W(t)$ appears to be a constant $W(t) = m$. So we can rewrite the previous expression of yield as follows:

\begin{equation}\label{rendement_saison}
Y_{k}  = m \int_{t_{k}+d}^{t_{k}+D}S(t)\,dt.
\end{equation}
 
As each healthy sucker has a cost \cite{Ngo11}, we subtract the cost of a healthy sucker from the yield to obtain the profit:

\begin{equation}\label{profit_saison}
R_{k}  = m \int_{t_{k}+d}^{t_{k}+D}S(t)\,dt -c.
\end{equation}

In equation~(\ref{profit_saison}), the biomass $S(t)$ depends of the infestation. Because of the switching law~(\ref{switch_new_season}), this infestation depends on all the fallow periods that have preceded the current season. Hence, the cumulated profit after the deployment of $n$ fallow periods is the sum of the corresponding $(n+1)$ cropping seasons and depends on the distribution of fallow periods. Its expression is given by:

\begin{equation}\label{profit}
R(\tau_1, \dots, \tau_n) = \sum_{k=0}^{n}R_k
\end{equation}

\subsection{Parameter values}\label{Subsection_param}

We rely on parameters in reference \cite{TankamChedjou20}. Most of them are set to realistic values obtained from experimental studies in the literature. However, some parameters cannot be easily measured, as for instance the consumption rate ($a$) of \textit{Radopholus similis} that is evaluated from the size, and therefore the mass, of a single pest \cite{VanWeerdt60}. Some data also come from different geographic regions, and we assume that they are compatible. For example, we graphically evaluated  the parameter $m$ of the weighting function intervening in the yield equation~\eqref{rendement_saison} based on plantations in Costa Rica, in a publication which relates the yield in boxes per hectare and per year (a box  weighing 18.14~kg) to the functional root weight in grams per plant \cite{Serrano05}. To convert this yield per hectare into the yield per plant, we used plant high density data from plantations in Latin America and the Caribbean \cite{Rosales10}; based on FAO data \cite{Fao17}, we converted the yield into a monetary yield. The currency used is the Central African CFA franc (XAF). 

Table~\ref{Param} summarizes the parameters considered in our optimization. 

\begin{table}
\centering
\small
\begin{tabular}{lll}
  \hline
  Param. & Description & Value \\
  \hline
  $d$ & Root growth duration  & 210 days \\
  $D$ & Cropping season duration & 330 days  \\
  $\beta$ & Infestation rate &  $10^{-1}$ \\
  $K$ &  Maximum root biomass &  150 g\\
  $\rho$ & Root growth rate & $0.025$ day$^{-1}$\\
  $\omega$ & Mortality rate of free pests &  $0.0495$ day$^{-1}$ \\
  $\mu$ & Mortality rate of infesting pests & $0.045$ \\
  $a$ & Consumption rate & $2.10^{-4}$ g.day$^{-1}$\\
  $\alpha$ & Conversion rate of ingested roots & $400$ g$^{-1}$ \\
  $\Delta$ & Half-saturation constant & $60$ g\\
  $\gamma$ & Proportion of pests laid inside & $0.5$ \\
  $q$ & Proportion of pests released in soil after uprooting &  $5\%$\\
  $S_0$ & Initial root biomass &  $60$ g \\
  $P_0$ & Initial soil infestation & $100$  \\
  $m$ & Root to yield conversion rate & $0.3$ XAF.g$^{-1}$.day$^{-1}$ \\
  $c$ & Cost of a banana healthy sucker & $230$ XAF \\
  \hline
\end{tabular}
\caption{Parameter values. The value of parameter $c$ is found in \cite{Ngo11}. The value of parameter $m$ is estimated from references \cite{Serrano05,Rosales10,Fao17}. See \cite{TankamChedjou20} for more details on remaining parameters.} 
\label{Param}
\end{table}

\section{Optimization}\label{Section_Optimization}

From switching rule (\ref{switch_new_season}), long fallow durations $\tau_n$ lead to the reduction of the soil infestation. Equation~(\ref{rendement_saison}) shows that the seasonal yield is linked to fresh root biomass that depends on the infestation level throughout the season. Increasing the fallow durations to drastically reduce the pest population increases the yield. However, on a fixed and finite time horizon that spans several seasons, increasing the fallow durations may reduce the number of cropping seasons and hence the multi-seasonal profit. For example, if we consider $D = 330$~days and a time horizon $T_{max} = 1000$~days, then $3$ cropping seasons can be completed with short fallow periods, for instance of $2$ and $5$~days. However, the crops will be hampered by severe infestations that may reduce the profit. In contrast, if fallow periods are longer, for instance $20$ and $50$~days, the profits of the first two seasons increase, but the third cropping season cannot be completed within $T_{max}$. 

\begin{figure}[htp]
   \centering
    \includegraphics[width=.8\linewidth]{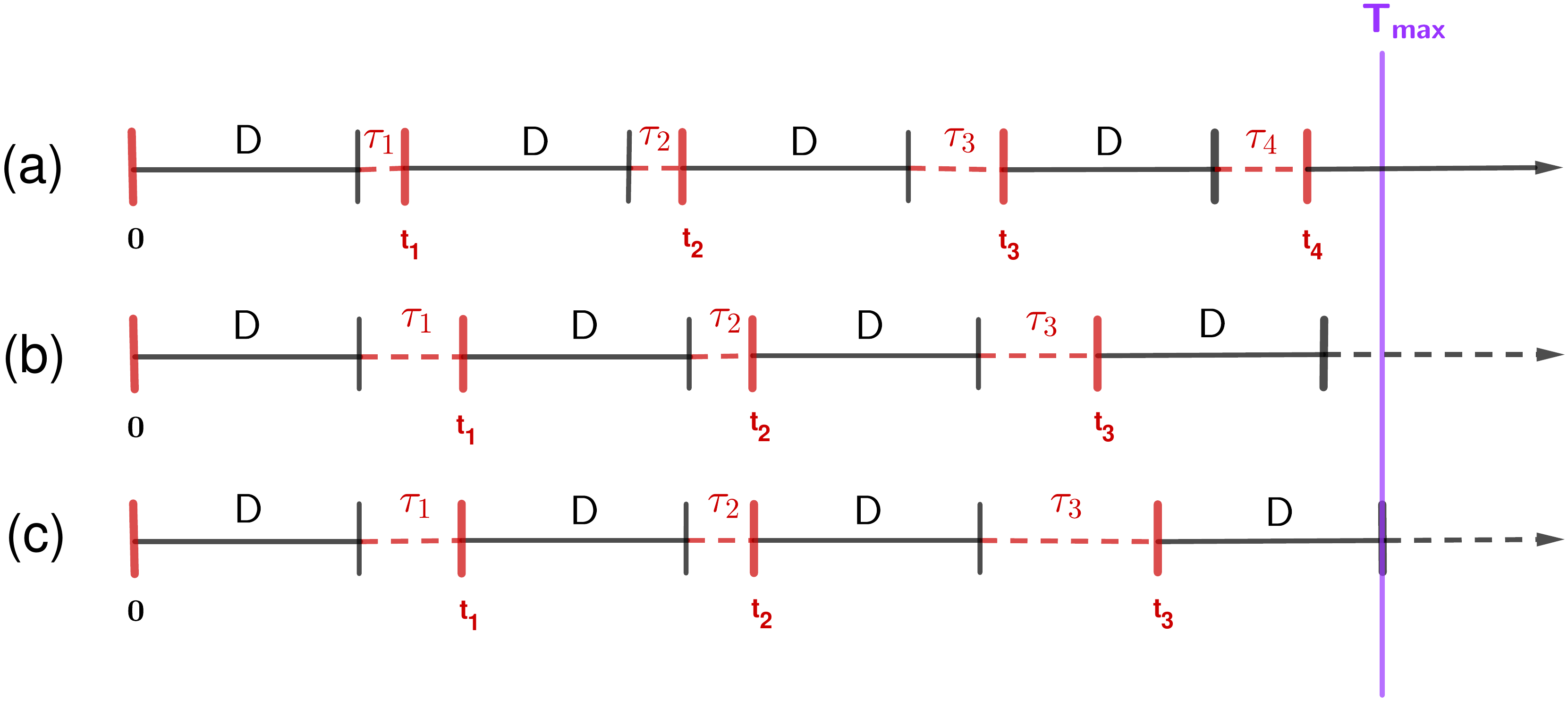}
      \caption{Possible occurrences of $T_{max}$. In (a) $T_{max}$ occurs in the middle of a cropping season, in (b) during a fallow period, and in (c) at the end of a cropping season, at the same time as the harvest.}
    \label{occurence}
\end{figure}  

The optimization problem here is to find a sequence of fallow durations that maximizes the total profit. For the problem to be relevant, the time horizon should allow to deploy at least one fallow period. Still denoting $T_{max}$ the time horizon, it corresponds to the following assumption:

\begin{assumption}\label{assumption_span}
The time horizon spans at least two seasons: $T_{max} > 2D$.
\end{assumption}

We hence state the following problem:

\begin{problem}\label{Problem_simple}
Under assumption \ref{assumption_span}, find a sequence of fallow durations $(\tau_1^*, \dots, \tau_n^*)$ such that maximizes $R$ defined in equation~\eqref{profit}.
\end{problem}

\subsection{Location of the optimal solutions}

 If $T_{max}$ hits the middle of a cropping season (Figure~\ref{occurence}(a)) then this season is useless in terms of profit because its harvest occurs after $T_{max}$. In the same way, if $T_{max}$ hits a fallow period (Figure~\ref{occurence}(b)), then this fallow is useless because it is not followed by a cropping season within $T_{max}$. In both cases, the time elapsed between the last harvest and $T_{max}$ might be added to the last useful fallow such that $T_{max}$ hits the end of a cropping season (Figure~\ref{occurence}(c)). This might increase the yield of the crop. Indeed, according to switching rule (\ref{switch_new_season}), increasing the length of the last fallow period leads to a reduced number of pests at the beginning of the last season. This reduction of pests is supposed to reduce the pest population throughout the season and therefore increase the root biomass and the profit. So the last harvest of the optimal solution should occur at $T_{max}$. But such ideal behaviour only holds in dynamical systems that show a certain ``monotonicity''. Definition \ref{def_monotonicity} describes such monotonicity for system (\ref{Cropping1_simple}).  

\begin{definition}{(Monotonicity)}\label{def_monotonicity}

System (\ref{Cropping1_simple}) is said to be monotone on the interval $(t_k, t_k+D]$ according to initial soil infestation $P(t_k^+) = P(t_k)$ if:
\begin{enumerate}
\item $\tilde P_k \ge P_k \Rightarrow P\big(t;t_k^+,(\tilde P_k,S_0,0)\big) \ge P\big(t;t_k^+,(P_k,S_0,0)\big)$ for all $t \in (t_k, t_k+D]$
\item $\tilde P_k \ge P_k \Rightarrow X\big(t;t_k^+,(\tilde P_k,S_0,0)\big) \ge X\big(t;t_k^+,(P_k,S_0,0)\big)$ for all $t \in (t_k, t_k+D]$
\item $\tilde P_k \ge P_k \Rightarrow S\big(t;t_k^+,(\tilde P_k,S_0,0)\big) \le S\big(t;t_k^+,(P_k,S_0,0)\big)$ for all $t \in (t_k, t_k+D]$
\end{enumerate}
Where $(P,S,X)\big(t;t_k^+,(P_k,S_k,X_k)\big)$ is the solution on $(t_k, t_k+D]$ of equation (\ref{Cropping1_simple}) with initial condition $(P_k,S_k,X_k)$ at $t =t_k^+$.
\end{definition}

We can reformulate the preceding argument as follows. 
Let $t_n$ be the starting point of the last useful cropping season, i.e.\ the last season for which $\delta_n = T_{max}-(t_n+D) \ge 0$. If the last harvest occurs at $T_{max}$, then $\delta_n = 0$. Otherwise, let $\tilde\tau_n = \tau_n + \delta_n$ and let $\tilde t_n = t_{n-1}+\tilde\tau_{n}$.

Because of switching law (\ref{switch_new_season}), we have $P(t_{n}) = P(t_{n-1} + \tau_{n}) \ge P(t_{n-1}+\tilde\tau_{n}) = P(\tilde t_{n})$. As a consequence, in case of monotonicity, we have
$ S(t_n + t) \le S(\tilde t_n + t)$, for  $t \in (0,D]$.
Hence, 

\begin{equation}\label{biomass_gain}
\int_{t_n+d}^{t_n+D}S(t)dt \le \int_{\tilde t_n+d}^{\tilde t_n+D}S(t)dt
\end{equation}
and the profit of season $n$ is higher for fallow duration $\tilde \tau_{n}$. 

We add the following assumption. 
\begin{assumption}\label{assumption_mon}
System (\ref{Cropping1_simple}) is monotone according to definition \ref{def_monotonicity}.
\end{assumption}

\begin{remark}
Assumption \ref{assumption_mon} means that the fewer the initial pests, the lower the infestation throughout the season, and the larger the root biomass. However, for such a predator-prey-like system this property may not hold depending on the parameters values. Indeed, when the level of infestation is high, root biomass $S$ undergoes overconsumption. Such overconsumption induces the decline of root biomass that is food for nematodes. This food decline leads to the decline of nematodes and therefore to the recovery of the root biomass, if the overconsumption occurs early enough during the root growth period $(t_k,t_k+d]$. Such dynamics give rise to cycles that induce the loss of monotonicity. If the pests are ``not too abundant'', this overconsumption scenario should not appear and the monotonicity holds at least for the finite duration $D$.
\end{remark}

We surmise that there exists a reasonable level of infestation below which Assumption~\ref{assumption_mon} is realistic and we illustrate it numerically. With parameters in Table~\ref{Param}, we plot in Figure~\ref{monotonie} the curves of free pests ($P$), infesting pests ($X$) and fresh root biomass ($S$) on a single season, for a large range of infestation values $P(t_k^+)$ at the beginning of cropping season $k$ that encompasses realistic values that are usually below 100. It shows that Assumption~\ref{assumption_mon} holds for variables $P$, $S$ and $X$, for realistic values of $P(t_k^+)$ below 200. Indeed, the curve order is conserved throughout the season (curves do not cross), so the monotonicity condition is verified for $P$, $S$ and $X$.

\begin{figure}[htp]
   \centering
    \includegraphics[width=.75\linewidth]{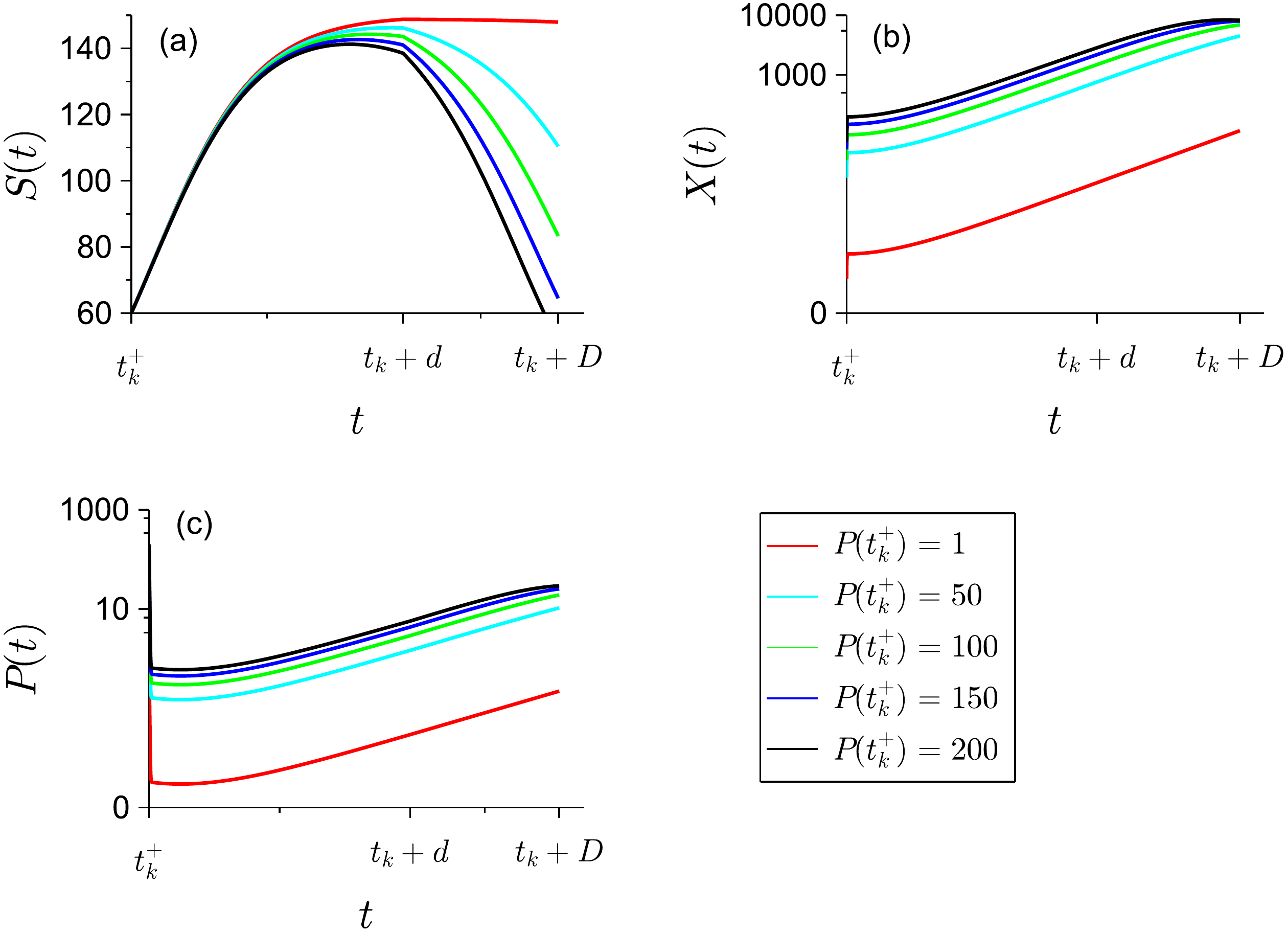}
    \caption{Curves of pests populations ($X$ and $P$) and fresh root biomass ($S$) for different values of the initial infestation $P(t_k^+)$. For initial infestations lower than 200, (a) the greater the initial infestation, the lower the curves of fresh root biomass on the domain $(t_k, t_k + D]$ ; (b) and (c) The greater the initial infestation, the higher the curves of pests on the domain $(t_k, t_k + D]$.}
    \label{monotonie}
\end{figure}

Nevertheless, we could build a counterexample setting two parameters to unrealistic values by observing the system dynamics during the first two seasons. We considered a very high and unrealistic value of the initial infestation $P(t_0^+) = 300$, compared to the reference value $P_0 = 100$. Then by setting the proportion of pests released in the soil after uprooting to $q = 100 \%$, we ensured that the infestation at the beginning of the second cropping season was higher than with the reference value $q = 5\%$. We varied the fallow duration $\tau$. The higher the $\tau$ values, the lower $P(t_1^+)$ at the beginning of the second cropping season. If monotonicity Assumption~\ref{assumption_mon} held, then we would expect a lower $S$ curve for a lower $\tau$ and hence a lower profit. However, for instance for $\tau=2$ and $\tau=10$, that yield to $P(t_1^+) = 6860$ and $P(t_1^+) = 4617$ respectively, it did not hold, as shown in Figure~\ref{contreexemple}: the $S$ curves cross (Figure~\ref{contreexemple}(a)) and, as a consequence, the profit is lower for the higher $\tau$ (Figure~\ref{contreexemple}(b)). More generally, for low values of $\tau$, the profit counter-intuitively decreases with $\tau$ (Figure~\ref{contreexemple}(b)). However, this situation is quite unrealistic, since a proportion $q = 100\%$ simply means that there is no uprooting of the old plant and that all the nematodes remain in the soil. 

\begin{figure}[htp]
   \centering
    \includegraphics[width=.4\linewidth]{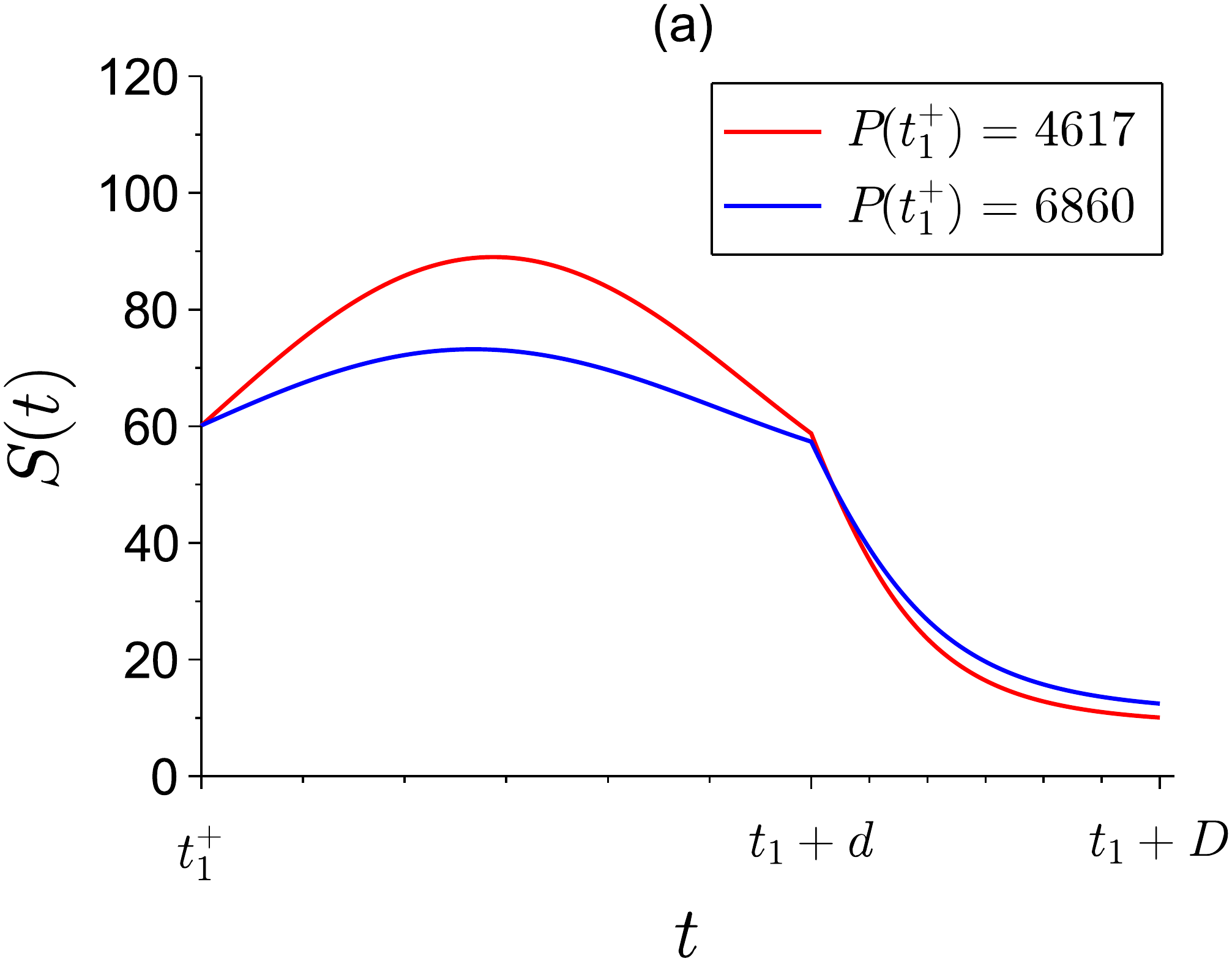}
    \includegraphics[width=.4\linewidth]{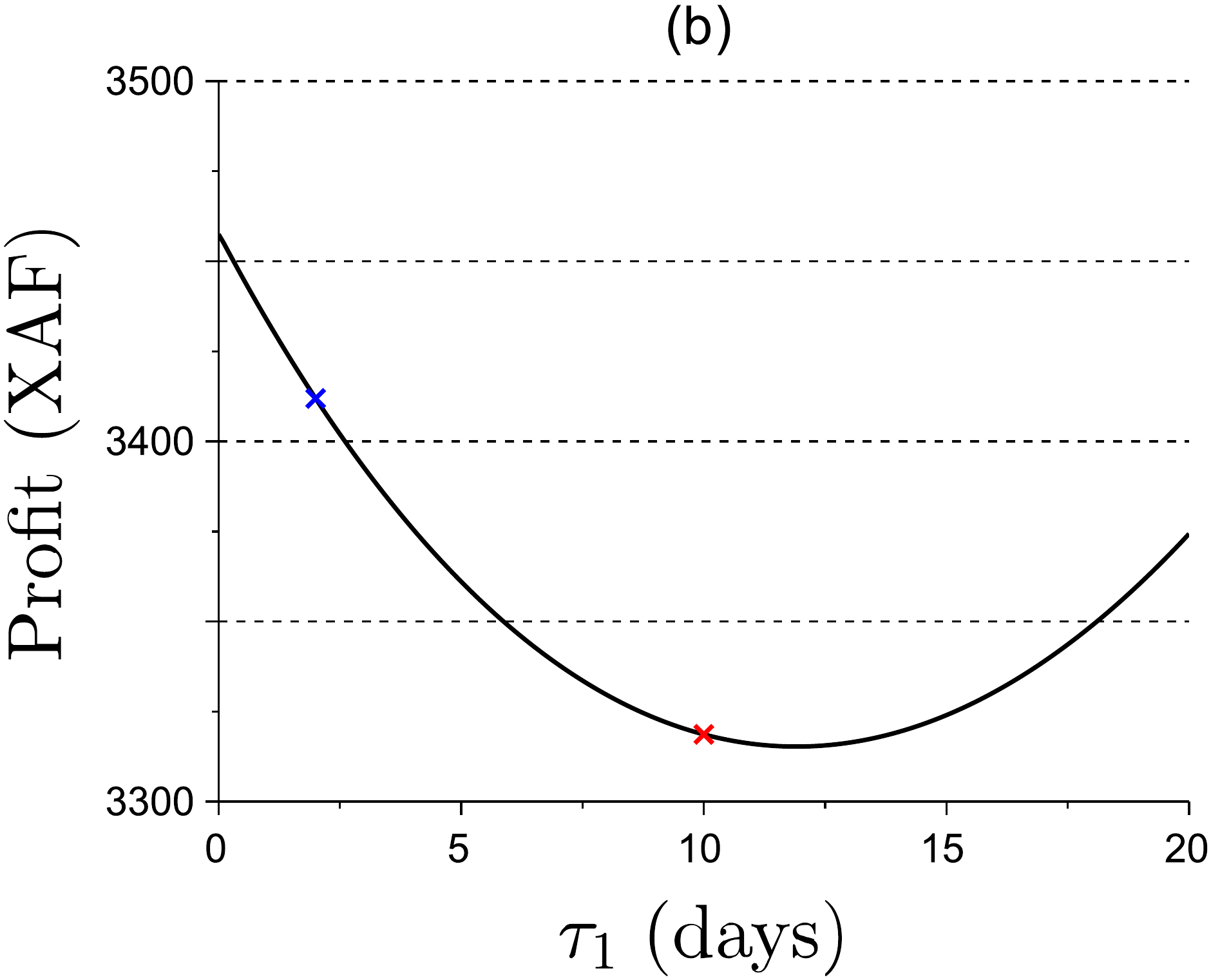}
    \caption{Counterexample: loss of monotonicity when $q = 300$ and $P_0 = 300$. (a) Root biomass during the second cropping season for two different values of $P(t_1^+)$ arising from $\tau_1 = 2$~days (blue curve) and $\tau_1 = 10$~days (red curve). At the beginning, the blue curve is below the red curve, which is consistent with the monotonicity assumption as $P(t_1^+)$ is higher for the blue curve. However, shortly after $t_1+d$, the relative position of the two curves switches and so the monotonicity Assumption~\ref{assumption_mon} is not verified. (b) Profit for the first two seasons as a function of fallow duration $\tau_1$. The fallow duration $\tau_1 = 2$~days (blue cross) yields a better profit than  the fallow duration $\tau_1 = 10$~days (red cross), although the former corresponds to a higher infestation $P(t_1^+)$ than the latter. This is due to the loss of monotonicity.}
    \label{contreexemple}
\end{figure}

According to the arguments above, we can state the following lemma:

\begin{lemma}
  Under monotonicity Assumption~\ref{assumption_mon}, if Problem~\ref{Problem_simple} admits a solution, then it belongs to one of the $n$-simplexes:
  
\begin{equation}\label{equation_simplex}
  A^n = \big\{ (\tau_1, \dots, \tau_n): \sum\tau_k = T_{max}-(n+1)D \big\},
\end{equation}
with $n = 1, ..., n_{max} \equiv \big\lfloor\frac{T_{max}}{D}\big\rfloor -1$. It means that the last harvest needs to occur at $T_{max}$.
\end{lemma}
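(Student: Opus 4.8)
The plan is to fix the number of complete cropping seasons and show that, for each such number, the profit is maximized on the corresponding face $A^n$; the global optimum then lies in the union of these faces. First I would use the hypothesis that only complete seasons are profitable: the profit $R$ counts only the seasons whose harvest falls within $T_{max}$. If season $n$ is the last such complete season (so $t_n+D\le T_{max}$), then the fallow durations $\tau_{n+1},\tau_{n+2},\dots$ and any partial season deployed after $t_n+D$ do not enter $R$, so $R$ is effectively a function of $(\tau_1,\dots,\tau_n)$ only. The requirement that $n+1$ complete seasons fit reads $t_n+D=(n+1)D+\sum_{k=1}^{n}\tau_k\le T_{max}$, i.e. $\sum_k\tau_k\le T_{max}-(n+1)D$, which carves out a full $n$-simplex of admissible allocations whose outer face is exactly $A^n$.

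The key step is to show that $R$ is non-decreasing along the direction that extends the last fallow $\tau_n$, so that the maximum over this simplex is attained on $A^n$. Increasing $\tau_n$ leaves seasons $0,\dots,n-1$ untouched, since by causality of the dynamics their trajectories and hence their profits $R_0,\dots,R_{n-1}$ depend only on $P_0$ and on $\tau_1,\dots,\tau_{n-1}$. It affects only the initial infestation of the last season through the switching rule (\ref{switch_new_season}): with $P(t_{n-1}+D)$ and $X(t_{n-1}+D)$ fixed, $P(t_n^+)=\big(P(t_{n-1}+D)+qX(t_{n-1}+D)\big)e^{-\omega\tau_n}$ is strictly decreasing in $\tau_n$. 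Feeding a smaller $P(t_n^+)$ into monotonicity Assumption~\ref{assumption_mon} (point 3 of Definition~\ref{def_monotonicity}) yields a pointwise larger biomass $S$ over the entire last season, hence a larger $\int_{t_n+d}^{t_n+D}S\,dt$ and therefore a larger $R_n$ by (\ref{profit_saison}); this is exactly inequality (\ref{biomass_gain}). Summing, $R$ does not decrease when $\tau_n$ is increased up to the boundary $\sum_k\tau_k=T_{max}-(n+1)D$, where the last harvest coincides with $T_{max}$.

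With this monotonicity in $\tau_n$ in hand, I would conclude as follows. Any feasible allocation with $n+1$ complete seasons and positive slack $\delta_n=T_{max}-(t_n+D)>0$ (the cases of Figure~\ref{occurence}(a),(b)) can be mapped to the allocation with $\tilde\tau_n=\tau_n+\delta_n$, which lies in $A^n$, without decreasing the profit; hence the supremum over the $n$-fallow simplex is attained on $A^n$. Taking the maximum over the admissible values of $n$ then shows that a maximizer of $R$ can be taken in $\bigcup_n A^n$, i.e. with the last harvest at $T_{max}$. The range of $n$ is pinned down by two constraints: $A^n$ is non-empty only if $T_{max}-(n+1)D\ge 0$, giving $n\le\lfloor T_{max}/D\rfloor-1=n_{max}$, while Assumption~\ref{assumption_span} ($T_{max}>2D$) guarantees that at least two seasons fit, so $n\ge 1$.

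The hard part will be the second step, and specifically making rigorous that extending $\tau_n$ perturbs only the last season. This requires noting that the within-season dynamics (\ref{Cropping1_simple}), including the growth schedule $\rho(t)$, depend only on the time elapsed since planting and on the initial condition, so translating the last season later in absolute time does not alter its trajectory for a given $P(t_n^+)$; the comparison then reduces cleanly to Definition~\ref{def_monotonicity}. A secondary point to keep in mind is that Assumption~\ref{assumption_mon} furnishes only a weak inequality in (\ref{biomass_gain}), so the argument establishes that an optimal allocation with the last harvest at $T_{max}$ exists, which is what the statement requires.
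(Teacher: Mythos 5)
Your proposal is correct and follows essentially the same route as the paper: transfer the slack $\delta_n=T_{max}-(t_n+D)$ into the last fallow, use the switching rule~(\ref{switch_new_season}) to get a smaller $P(t_n^+)$, and invoke point~3 of Definition~\ref{def_monotonicity} to obtain inequality~(\ref{biomass_gain}) and hence a non-decreasing profit, which places a maximizer on $A^n$. Your added remarks on causality, time-translation invariance of the within-season dynamics, and the weak-inequality caveat are sound refinements of the paper's informal argument rather than a different proof.
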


Problem~\ref{Problem_simple} can be rewritten as: 

\begin{problem}\label{probleme}
Let $\mathcal{A}$ be the reunion of all the $n$-simplexes $A^n$ ($n \in \{1, ..., n_{max} \}$). Under Assumptions \ref{assumption_span} and \ref{assumption_mon}, find the optimal sequence $(\tau_1^*, \dots, \tau_n^*) \in \mathcal{A}$  of fallow durations that maximizes $R$ defined in equation~\eqref{profit}. 
\end{problem}

We prove the existence of solutions to Problem \ref{probleme}.

\begin{theorem}{(Existence of optimal fallow deployments)}

For all $n_{max} \ge 1$, Problem \ref{probleme} has a solution on the collection $\mathcal{A}$ of the $n$-simplexes $A^n$ ($n \in \{1, ..., n_{max} \}$) defined in equation (\ref{equation_simplex}).
\end{theorem}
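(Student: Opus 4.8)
The plan is to prove existence by the Weierstrass extreme value theorem applied simplex by simplex, and then by taking a maximum over the finitely many values of $n$. First I would observe that each $A^n$, once equipped with the implicit physical constraint $\tau_k \ge 0$, is the intersection of the affine hyperplane $\sum_k \tau_k = T_{max}-(n+1)D$ with the non-negative orthant of $\R^n$; this is a closed and bounded subset of $\R^n$, hence compact, and it is non-empty because the defining constant satisfies $T_{max}-(n+1)D \ge T_{max}-(n_{max}+1)D = T_{max} - \lfloor T_{max}/D\rfloor\, D \ge 0$ for every $n \le n_{max}$. Thus each of the finitely many feasible sets is a genuine compact simplex.

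The heart of the argument is the continuity of the objective $R$ on each $A^n$, which I would establish through the recursive structure induced by \eqref{Cropping1_simple} and \eqref{switch_new_season}. Reparametrising each cropping season to local time $s\in(0,D]$, the vector field of \eqref{Cropping1_simple} depends on $s$ only through the fixed switch of $\rho$ at $s=d$, and is smooth in the state on the region $S+\Delta>0$, which contains the biologically relevant set $S\ge 0$ since $\Delta>0$. Well-posedness of the model guarantees that solutions remain bounded, so the classical theorem on continuous dependence of ODE solutions on initial conditions applies: the flow map sending the initial free-pest level $P_{init}$ (with $S=S_0$, $X=0$) to the end-of-season values $\big(P(D),X(D)\big)$ of free and infesting pests is continuous, and so is the map $g(P_{init}) = m\int_d^D S(s;P_{init})\,ds$ giving the seasonal biomass integral. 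I would then show by induction that $P(t_k^+)$ is a continuous function of $(\tau_1,\dots,\tau_k)$: indeed $P(t_0^+)=P_0$ is constant, and the switching rule \eqref{switch_new_season} writes $P(t_{k}^+)$ as the continuous composition of the flow applied to $P(t_{k-1}^+)$ with the factor $e^{-\omega\tau_{k}}$, which is continuous in $\tau_k$. Consequently each seasonal profit $R_k = g(P(t_k^+))-c$, and hence their finite sum $R$ in \eqref{profit}, is continuous on $A^n$.

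With compactness and continuity in hand, Weierstrass yields for each $n\in\{1,\dots,n_{max}\}$ a maximiser of $R$ on $A^n$ attaining a value $R_n^\star$. Since $\{R_1^\star,\dots,R_{n_{max}}^\star\}$ is a finite set of reals it has a greatest element $R_{n^\star}^\star$, and the corresponding maximiser in $A^{n^\star}$ solves Problem~\ref{probleme}; this scalar comparison conveniently sidesteps the fact that the simplexes of $\mathcal{A}$ live in spaces of different dimension. I expect the only genuine obstacle to be the continuity step, and within it the careful bookkeeping of the recursion: one must verify that the end-of-season state depends continuously on the previous season's data through an unbroken chain of continuous maps (flow, switching, flow, $\dots$), and that neither the piecewise-constant growth rate $\rho$ nor the saturation term $S/(S+\Delta)$ breaks this chain. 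The compactness and finite-maximum arguments are routine once the domain is correctly identified as a union of standard simplexes.
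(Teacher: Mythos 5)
Your proposal is correct and follows essentially the same route as the paper: continuity of each seasonal profit $R_k$ in $(\tau_1,\dots,\tau_k)$ via continuous dependence of the ODE flow on initial conditions chained through the switching rule~\eqref{switch_new_season}, combined with the finiteness of the number of simplexes. If anything, your version is tighter, since you explicitly invoke compactness of each $A^n$ and Weierstrass to get attainment, whereas the paper's proof passes somewhat loosely from ``$R$ is bounded'' to ``$R$ admits a maximum''.
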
 

\begin{proof} Let us consider from equation (\ref{profit}) the expression $R(\tau_1, \dots, \tau_n) = \sum_{k=0}^{n}R_k$. First, $(n+1)$ is bounded by $\big\lfloor \frac{T_{max}}{D} \big\rfloor$.
Besides,
\begin{itemize}
\item[(i)] $Y_0 = m\int_{t_0+d}^{t_0+D}S(t)dt$ is finite and doesn't depend on any $t_i$.
\item[(ii)] For all $k \ge 1$, the bounds of the integral $Y_k = m \int_{t_k+d}^{t_k+D}S(t)dt$ continuously depends on $(\tau_1, \dots, \tau_{k-1})$ as $t_k = D + \tau_1 + D +  \tau_2 + \dots + D + \tau_{k-1}$.
\end{itemize}

$S(t)$ continuously depends on the initial condition $(P(t_k+d^+),S(t_k+d^+),X(t_k+d^+))$ of the second subsystem of \eqref{Cropping1_simple} that continuously depends on the initial condition $(P(t_k^+),S(t_k^+),X(t_k^+))$. From switching rule (\ref{switch_new_season}), this initial condition continuously depends on $\tau_{k}$.

Hence, the yield $Y_k$ continuously depends on $(\tau_1, \dots, \tau_{k})$ for all $k \ge 1$. 

It follows that $\sum_{k=0}^{n}R_k$ is upper-bounded and lower-bounded. Therefore, $R$ admits a minimum and a maximum on $\mathcal{A}$.
\end{proof}

\begin{remark}
The maximizing sequence might not be unique. If two or more solutions are optimal we would need to define a tie-break rule. For instance, we could prefer (i) a solution with less cropping seasons;  (ii) within solutions with the same number of cropping seasons, the solution closer to the average fallow duration.   
\end{remark}

\begin{remark}\label{3D}
If $T_{max}<3D$, then a maximum of two cropping seasons and one fallow can be deployed. Problem \ref{probleme} admits a unique solution $\tau_1^* = T_{max}-2D$. 
\end{remark}

\subsection{Optimization algorithm}\label{Subsection_algorithm}

For values of $T_{max}$ that are larger than $3D$, the solution could imply two or more fallow periods. In order to numerically solve the optimization problem (\ref{probleme}), we propose an algorithm of adaptive random search as proposed in Walter and Pronzato \cite{Walter97}, that we adapt to simplexes. This method is useful since the function $R$ may have many local maximizers and it is highly desirable to find its global maximizer. The convergence of this kind of algorithms has been proven in the literature \cite{Solis81}. Algorithm \ref{algo} gives the solution of the optimization problem \ref{probleme}. The profits of the maximizers in each dimension are compared to obtain the optimum. 

\begin{algorithm}[h]
  \KwData{$T_{max},\: D$ \\
  $n_{max} := \bigg\lfloor \displaystyle\frac{T_{max}}{D}\bigg\rfloor -1$
  \hspace{2em}  // maximum number of fallow periods that can be deployed on  $[0,T_{max}]$}
  \KwResult{optimal fallow sequence $\vec{\tau}^*$ of size $n^*$}
  $n^*=1$  \hspace{8em} // initialization \\
  $\vec{\tau}^* := \vec{\tau}^{1,*} =T_{max}-2D$  \\
  \For{$n := 2$ to $n_{max}$}{
    $\vec{\tau}^{n,*} :=$ ARS($n$)  \hspace{3em} // $n$-optimal fallow sequence of size $n$\\
    \If{$R(\vec{\tau}^{n,*}) > R(\vec{\tau}^*)$}{
      $n^*= n$ \\
      $\vec{\tau}^* = \vec{\tau}^{n,*}$
    }
  }
  \caption{Optimization algorithm for the numerical resolution of Problem \ref{probleme}. Integer $n$ corresponds to the number of fallow periods that are deployed on interval $[0,T_{max}]$. For each $n\le n_{max}$, the $n$-optimal fallow sequence $\vec{\tau}^{n,*}$ is computed: for $n=1$ the solution is trivial; for $n>1$, the $n$-optimum is computed using an adaptive random search (ARS) algorithm, adapted to simplex $A^n$. The ARS algorithm is detailed in~\ref{ars}. The optimal fallow deployment $\vec{\tau}^*$ corresponds to the $n$-optimum that yields the highest profit $R$.}\label{algo}
\end{algorithm}

\section{Numerical results}\label{Section_Results}

We provide the solution for small values of the time horizon $T_{max}$ in Subsection~\ref{Subsection_small_dimension} and for high values in Subsection~\ref{Subsection_high_dimension}. The latter relies on the optimization algorithm described above in Subsection~\ref{Subsection_algorithm}.

\subsection{Small dimensions}\label{Subsection_small_dimension}

In small dimensions, when $T_{max}<5D$, up to 3 fallow periods can be deployed. The 3-dimension simplex $A^3$, defined in equation \eqref{equation_simplex}, can be represented on a plane. Hence, we can have a good numerical understanding of the location of the optimal solution of problem (\ref{probleme}), by building a graphical representation of the  profit on the simplex and identifying its maximum. We name ``size of the simplex'' the length of each side of the simplex. We use parameter values in Table~\ref{Param}.

We first set $T_{max} = 1400$~days. Up to 4 cropping seasons, corresponding to 3 fallow periods covering $1400-4\times330=80$~days, can hence be deployed. Figure~\ref{Levels} is a representation of the profit, defined in equation~\eqref{profit}, in the 3-simplex of size $80$~days projected on its first coordinates $(\tau_1,\tau_2)$. The duration of the third fallow period is then  $\tau_3=80-(\tau_1+\tau_2)$.  We notice that:
\begin{itemize}
\item The maximum is obtained for 3 fallow periods and is located at the summit $\tau_1=80$~days. This may be because, when there is enough fallow duration to be distributed  (here $80$~days), a long first fallow can lead to drastic pest reduction and hence better profits for the following cropping seasons. 
\item The profit is low near the point $\tau_3=80$~days (that corresponds to $\tau_1=\tau_2=0$) and increases toward the edge $\tau_3=0$ (that corresponds to the hypotenuse). Higher profits hence correspond to shorter durations for the last fallow period, which is consistent with the previous remark. 
\end{itemize}

\begin{figure}[hpt]
   \centering
    \includegraphics[width=.55\linewidth]{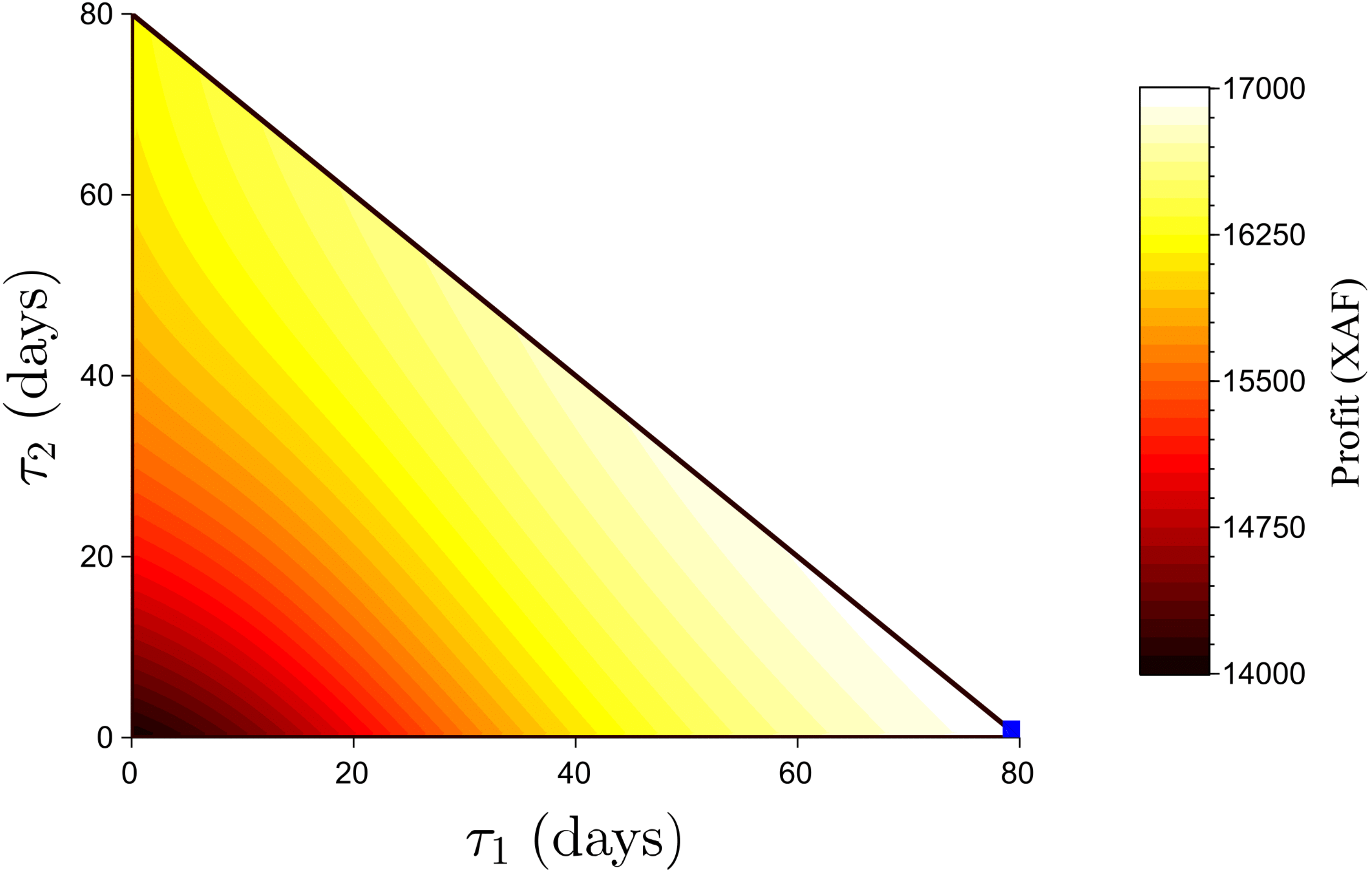}
      \caption{Profit as a function of the fallow period distribution on the $A^3$ simplex of size 80 days ($T_{max} = 1400$~days). The simplex is projected on its first two coordinates $(\tau_1, \tau_2)$ and $\tau_3=80-(\tau_1+\tau_2)$. The lighter the colour, the higher the profit. The maximum is indicated by a blue square and  corresponds to  $\tau_1=80$~days and $\tau_2=\tau_3=0$ day.}
    \label{Levels}
\end{figure}  

The strategy may be very different with a different time horizon, which nevertheless admits the same number of deployable seasons. For example, let $T_{max} = 1340$~days instead of 1400~days. In this case, it is preferable to deploy 3 cropping seasons (i.e.\ 2~fallow periods) and the optimal deployment is given by
$(\tau_1, \tau_2) = (332, 18)$ days. However, a 332-days fallow period is somehow too long, so we introduce an upper bound of 60~days  on each fallow period. This brings  the optimal solution back to 3 fallow periods, illustrated in Figure~\ref{Level20}. This figure shows that the maximum is located at the summit $\tau_3=20$~days. Since the total fallow duration ($\tau_1 + \tau_2 + \tau_3 = 20$~days) is small, it may be better to deploy it when the pest infestation is at its highest in order to maximise the fallow impact. In this case, a first 20-day fallow period is not long enough to sufficiently reduce the pest population, so it is more efficient to allocate these 20 days to the last fallow.

\begin{figure}[hpt]
   \centering
    \includegraphics[width=.55\linewidth]{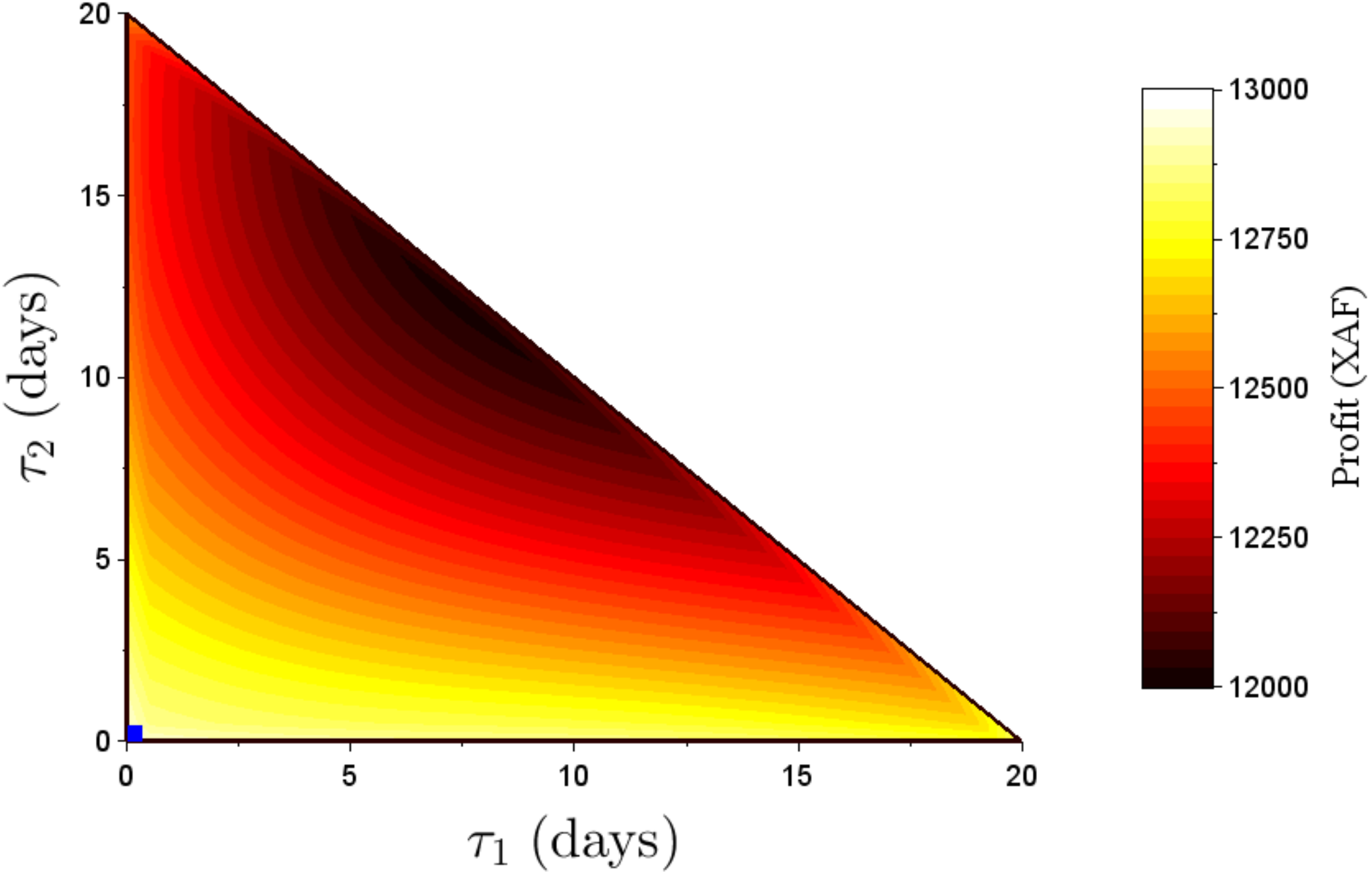}
      \caption{Profit as a function of the fallow period distribution on the $A^3$ simplex of size 20 days ($T_{max} = 1340$~days). The simplex is projected on its first two coordinates $(\tau_1, \tau_2)$ and $\tau_3=20-(\tau_1+\tau_2)$. The lighter the colour, the higher the profit. The maximum is indicated by a blue square and  corresponds to  $\tau_3=20$~days and $\tau_1=\tau_2=0$ day.}
    \label{Level20}
\end{figure}  

Still in the case of a time horizon of 1340~days, the optimum can be brought back from the summit $\tau_3 = 20$~days to the summit $\tau_1 = 20$~days when the initial infestation is large, and therefore the impact of the first fallow period is significant. We set $P_0 = 10000$ nematodes, instead of the reference value $P_0 = 100$ nematodes found in Table~\ref{Param}, and we illustrate the levels of infestation in Figure~\ref{Level20_10000}. As in Figure~\ref{Levels}, the maximum consists in deploying  the total fallow duration during the first period. In this case though, this strategy does not drastically reduce the pest population, but prevents it from increasing too much. Profits are globally lower than in the previous cases, whatever the fallow distribution.

\begin{figure}[hpt]
   \centering
    \includegraphics[width=.55\linewidth]{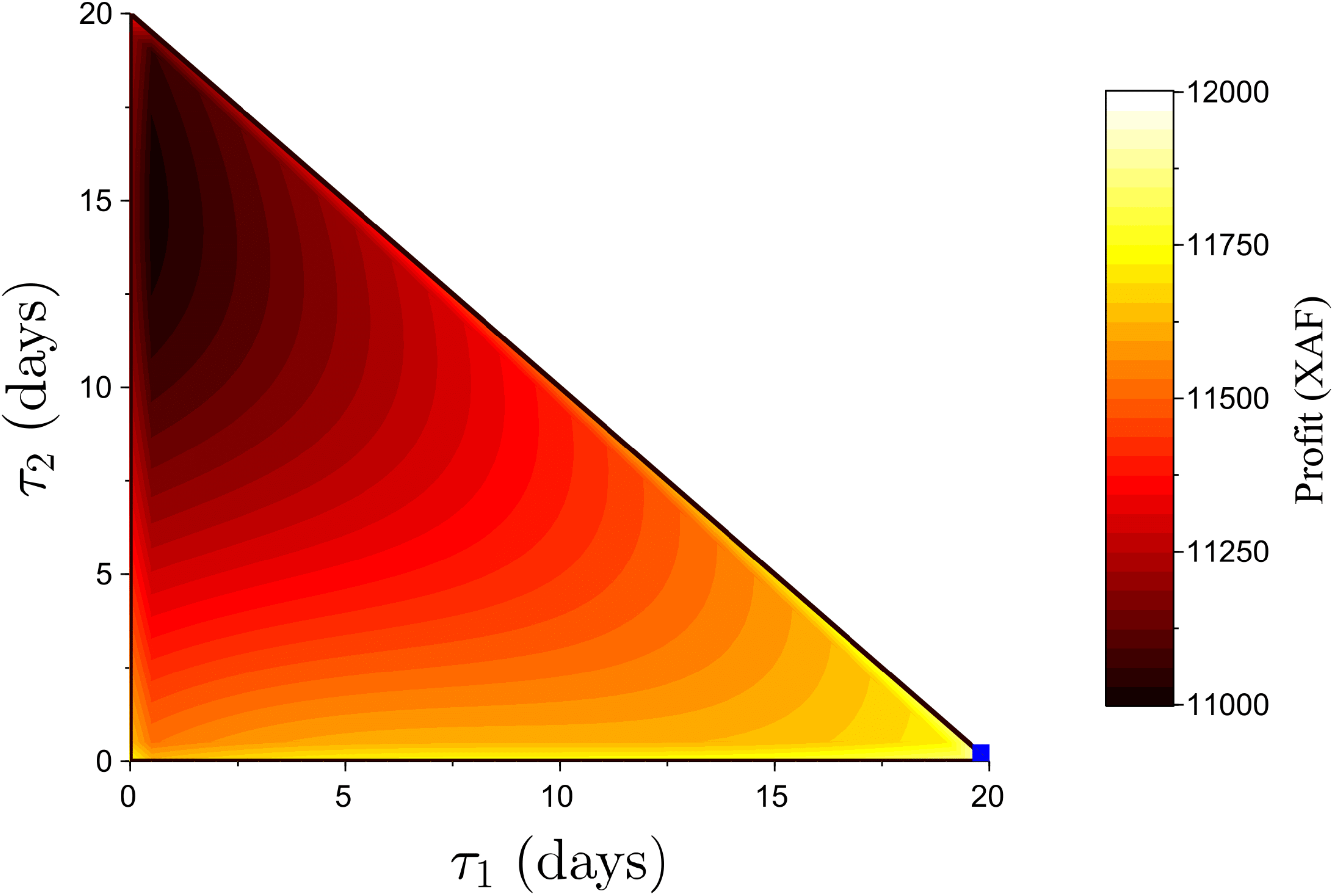}
      \caption{Profit as a function of the fallow period distribution on the $A^3$ simplex of size 20 days ($T_{max} = 1340$~days), for a very high initial infestation ($P_0=10000$ nematodes). The simplex is projected on its first two coordinates $(\tau_1, \tau_2)$ and $\tau_3=20-(\tau_1+\tau_2)$. The lighter the colour, the higher the profit. The maximum is indicated by a blue square and  corresponds to  $\tau_1=20$~days and $\tau_2=\tau_3=0$ day.}
    \label{Level20_10000}
\end{figure}

\subsection{High dimensions}\label{Subsection_high_dimension}

In high dimensions, i.e.\ when $T_{max}$ is large, we cannot easily illustrate the profit as a function of the fallow distribution. Moreover, thoroughly exploring the space of all the possible sequences of fallow periods would require a great deal of computation. Therefore, we solve the optimization Problem~\ref{probleme} using the Algorithm~\ref{algo} in subsection~\ref{Subsection_algorithm}.

We still use parameter values in Table~\ref{Param}. We set $T_{max}=4000$~days. Up to 12 cropping seasons, i.e.\ 11 fallow periods, can be deployed over this time horizon. However, the optimal deployment is obtained for 11 cropping seasons, which corresponds to a total of 370 days of fallow. It is illustrated in Figure~\ref{solution}. The corresponding optimal profit is $R(\vec{\tau}^*) =  54530$ XAF (83 euros) and the final soil infestation after the last harvest is $P(T_{max}^+) = 251$ nematodes.

\begin{figure}[htp]
   \centering
     \includegraphics[width=.6\linewidth]{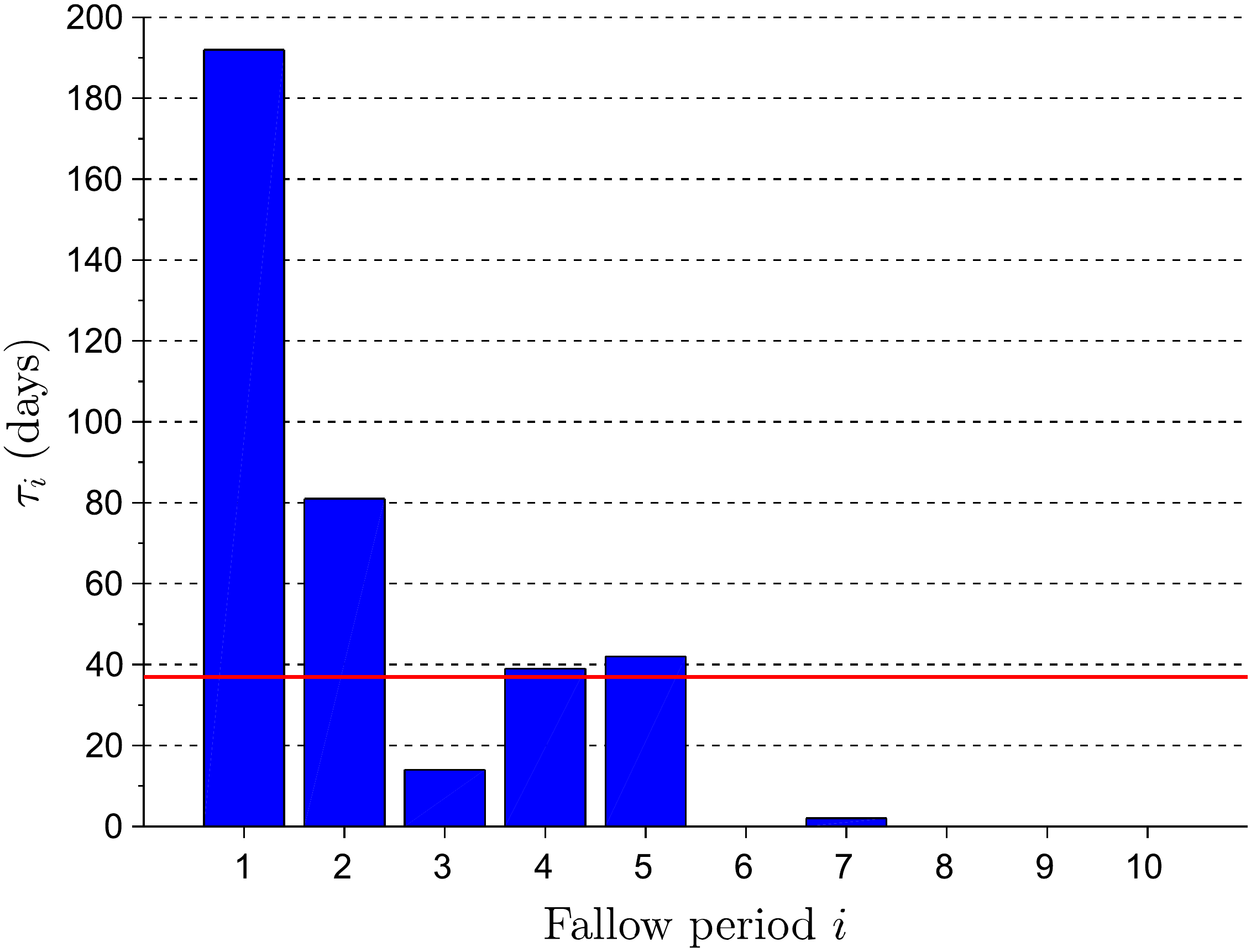}
    \caption{Optimal distribution of fallow periods for time horizon $T_{max}=4000$~days. The maximal profit is obtained for 11 cropping seasons and 10 fallows: $\vec{\tau}^*=(192,81,14,39,42,0,2,0,0,0)$ (in days). The red line corresponds to the average fallow period $\tau=37$~days.
    }
    \label{solution}
\end{figure}

\subsection{Regulation of high dimension solutions}\label{Section_regulation}

The optimal distribution of fallow periods found in Subsection~\ref{Subsection_high_dimension} and Figure~\ref{solution} is very dispersed around the average fallow duration. The first fallow period is huge whereas some others are null. Even if the strategy is optimal, farmers could be reluctant to implement such a an irregular cropping strategy. Besides, the level of infestation (251 nematodes) after the last harvest is somehow high, which would be a problem if the grower then cropped  a good host for \textit{R. similis}. It is necessary to find a compromise between the balance of the fallow durations and the profit.
In this subsection, we aim at limiting the durations of the fallow periods without penalizing the profit too much. First, we regulate the solution by bounding the duration of fallow periods. Then, we favour fallow periods that are close to the average  duration (that depends on the number of cropping seasons deployed). Finally, we consider constant fallow periods. We still use parameter values in Table~\ref{Param} and $T_{max} = 4000$~days for the numerical simulations.

\subsubsection{Bounded fallows}

The first regulation consists in bounding all fallow period durations $\tau_k$ by a maximal value $\tau_{sup}$. This means that $\tau_k \le \tau_{sup}$, $k = 1 \dots n$.
Since $\sum_{k=1}^n \tau_k =  T_{max}-(n+1)D$, we should have $n.\tau_{sup} \ge T_{max} -(n+1)D$. Hence:

\begin{equation*}
n \ge \frac{T_{max}-D}{\tau_{sup}+D}.
\end{equation*}
The optimal fallow distribution $\vec{\tau}^*$ should then be sought for dimensions $n$ between:

\begin{equation*}
  n_{min} = \left\lceil \frac{T_{max}-D}{\tau_{sup}+D} \right\rceil \text{\quad and \quad} n_{max} = \left\lfloor\frac{T_{max}}{D}\right\rfloor -1
\end{equation*}

We run Algorithm)~\ref{algo} for such dimensions and compare the profits obtained. When a $\tau_k$, chosen randomly, is greater than $\tau_{sup}$ in the ARS algorithm (\ref{ars}), we simply discard it and draw another one.

With the parameter values in Table~\ref{Param}, $T_{max} = 4000$~days and a maximal fallow duration $\tau_{sup} = 60$~days, the algorithm converges to the solution illustrated in Figure~\ref{Born_triv}. The associated  profit is $R(\vec{\tau}^*) =  54285$ XAF, which is just $0.4 \%$ worse than the non-regulated solution obtained in Subsection~\ref{Subsection_high_dimension}. The final soil infestation after the last harvest is $P(T_{max}^+) = 223$ nematodes.

\begin{figure}[htp]
   \centering
     \includegraphics[width=.6\linewidth]{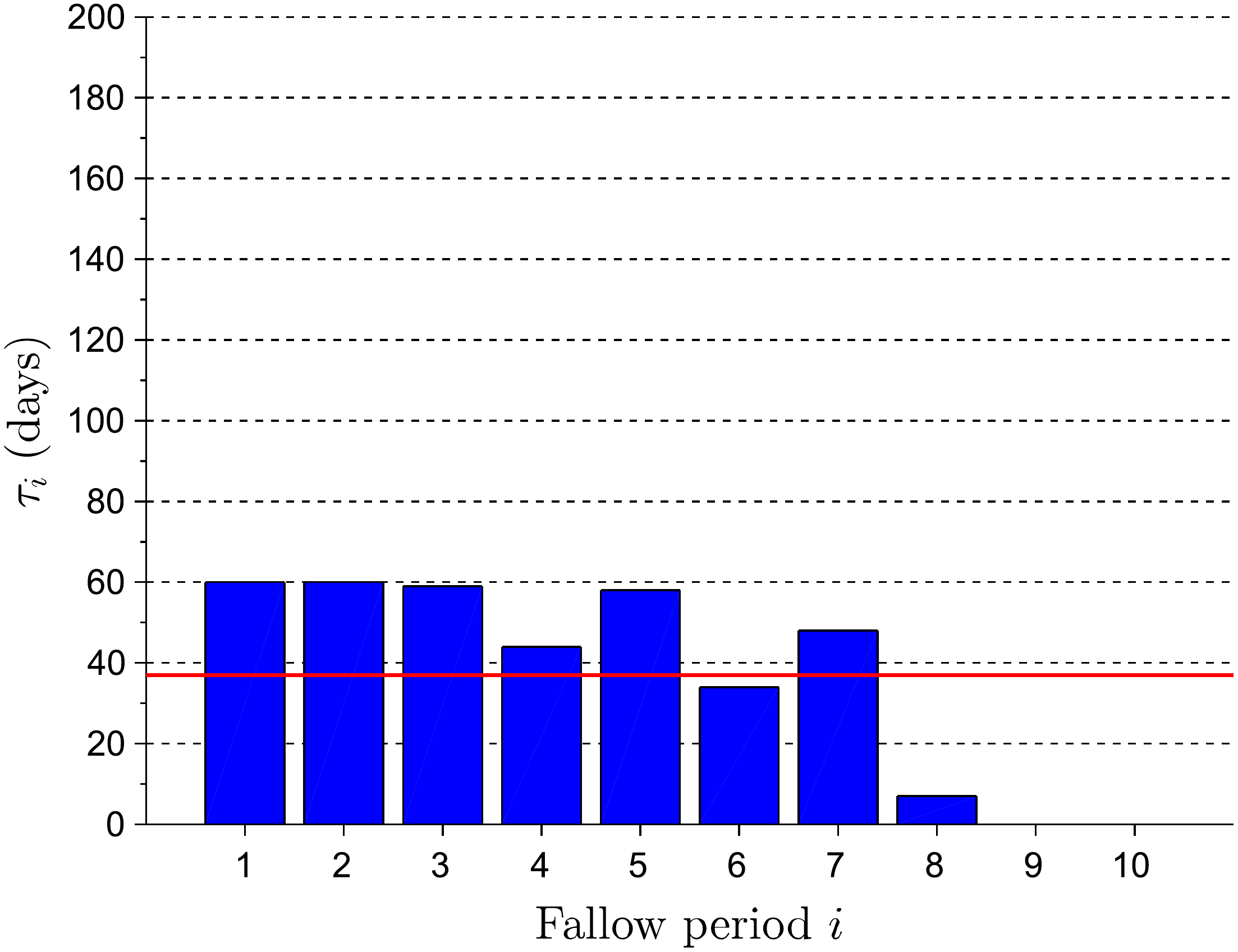}
    \caption{Optimal distribution of fallow periods for time horizon $T_{max}=4000$~days, when fallow durations are upper-bounded by $\tau_{sup}=60$~days. The maximal profit is obtained for 11 cropping seasons and 10 fallows: $\vec{\tau}^*=(60,60,59,44,58,34,48,7,0,0)$ (in days). The red line corresponds to the average fallow period $\tau=37$~days.}
    \label{Born_triv}
\end{figure}

\subsubsection{Penalizing dispersed fallows}

The second regulation consists in limiting the dispersion of the fallow distribution $\vec{\tau}$ around the average fallow duration, i.e.\ the distance between $\vec{\tau}$ and the centre of the simplex denoted by $\vec{\tau}_0$. Thereby, we introduce a penalty function in the expression of the profit, which is proportional to this distance $ d(\vec{\tau},\vec{\tau}_0)$, and define the penalized profit by:

\begin{equation}\label{regulated_yield}
  \tilde R(\vec{\tau}) = R(\vec{\tau}) - r \, d(\vec{\tau},\vec{\tau}_0).
\end{equation}
The regulation term $r$ is taken such that the magnitude of the penalty term $r \, d(\vec{\tau},\vec{\tau}_0)$ is an acceptable fraction of the magnitude of the unpenalized profit $R(\vec{\tau})$. Choosing $1/10$ for this faction, we deduce the value of $r$ as follows:

\begin{equation*}
  r = \frac{R(\vec{\tau}_0)}{10 \times d_{max}},
\end{equation*}
where $d_{max}$ stands for the longer distance to the centre of the simplex.

We apply Algorithm~\ref{algo} for the profit function $\bar R$ given by equation~\eqref{regulated_yield}. The algorithm converges to the solution illustrated in Figure~\ref{Born_ad}, using parameter values in Table~\ref{Param} and $T_{max} = 4000$~days. The associated  penalized profit is $\tilde R(\vec{\tau}^*)=54250$ XAF, which is just $0.5 \%$ worse than the non-regulated optimum  obtained in Subsection~\ref{Subsection_high_dimension}. The final soil infestation after the last harvest is $P(T_{max}^+) = 223$ nematodes.

\begin{figure}[htp]
   \centering
     \includegraphics[width=.6\linewidth]{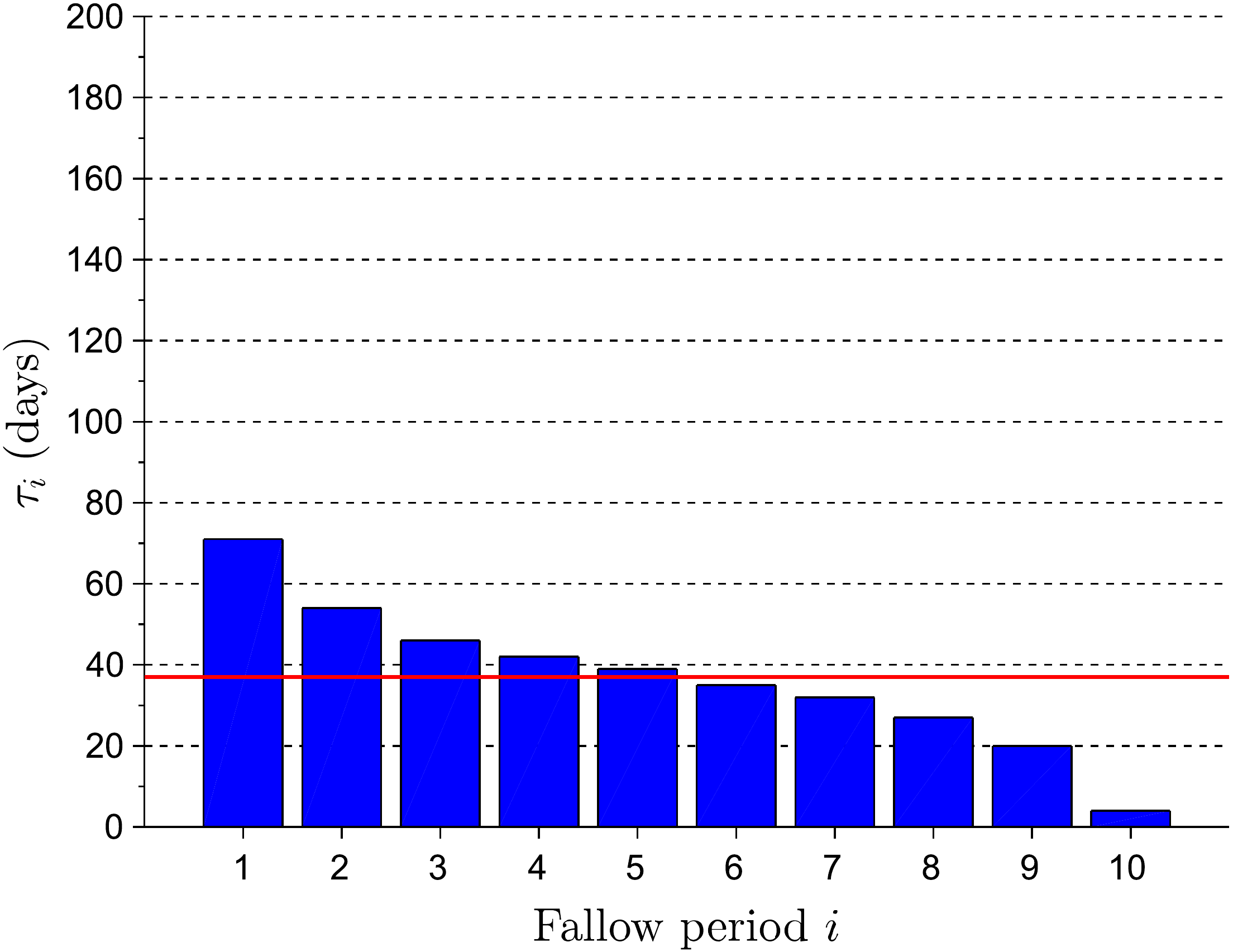}
    \caption{Optimal distribution of fallow periods for time horizon $T_{max}=4000$~days, when far from average fallow durationss are penalized. The maximal profit is obtained for 11 cropping seasons and 10 fallows: $\vec{\tau}^*=(71,54,46,42,39,35,32,27,20,4)$ (in days). The red line corresponds to the average fallow period $\tau=37$~days.
      }
    \label{Born_ad}
\end{figure}

\subsubsection{Constant fallows}

We previously deduced from monotonicity Assumption~\ref{assumption_mon} that the last harvest should occur at $T_{max}$ to optimize the profit. Indeed, since we could distribute the total fallow duration quite freely, it was always profitable to increase the fallow preceding the last cropping season, instead of deploying it at the end of the time horizon. In this section though, fallow durations are set to a constant value, which is an additional constraint that does allow the previous reasoning. We show below that, under the same Assumption~\ref{assumption_mon}, the last harvest of the optimal solution also occurs at $T_{max}$.

Given a fallow duration $\tau$, the  number of complete cropping seasons that can be deployed over time horizon $T_{max}$ is given by:

\begin{equation}\label{number_of_seasons}
  N \equiv N(\tau) = sup \{n \in \N \ | \ T_{max}  \ge nD + (n-1)\tau \},
\end{equation} 
As an incomplete cropping season yields no income, because of the sucker cost it induces a negative profit. Therefore, we assume that if $\tau$ leads to an incomplete season at the end end of the time horizon (corresponding to case~(a) in Figure~\ref{occurence} but with constant fallows), the last sucker is not planted. We can then rewrite the profit~(\ref{profit}) over $T_{max}$ as:

\begin{equation}\label{rendement_t_max}
  R(\tau)=\sum_{k=1}^{N(\tau)} R_k,
\end{equation}
and formulate the following optimization problem: 

\begin{equation}\label{constant_problem}
  \tau^* = \argmax_{\tau \ge 0} R(\tau).
\end{equation}

\begin{remark}\label{R_non_continue}
  Function $R(\tau)$ in equation~(\ref{rendement_t_max}) is not necessarily a continuous function of $\tau$. A discontinuity may occur for $\tau$ values such that $N(\tau+\varepsilon)=N(\tau)-1<N(\tau)$ for small positive values of $\varepsilon$.
  As an incomplete cropping season is not  profitable and hence not implemented, this small increase of the fallow duration wastes a whole cropping season.
\end{remark}

The previous remark shows that $\tau$ values such that $N(\tau + \varepsilon) < N(\tau)$ for  small positive values of $\varepsilon$, locally maximize the profit ``on the right'': $R(\tau) > R(\tau+\varepsilon)$, provided that the yield of a cropping season is higher than the cost of a sucker. This assumption is reasonable as it ensures the viability of the cropping system. If it did not hold, the profit would be negative and such $\tau$ values would minimize the profit ``on the right''.

Besides, if Assumption~\ref{assumption_mon} holds, then such $\tau$ values maximize the profit ``on the left''. Indeed, when two different fallow durations correspond to the same number of cropping seasons, the longer fallow leads to a greater reduction of the pest population during the fallow, that in turn leads, by monotonicity, to a greater root biomass during the following cropping season  and therefore to a better yield. Therefore, if Assumption~\ref{assumption_mon} holds, the solution of Problem~(\ref{constant_problem}) belongs to the set:

\begin{equation}\label{definition_xi}
\begin{aligned}
  \Xi = \left\{\tau \ge 0: \frac{T_{max} - D}{D+\tau} \in \N \right\} &= \left\{\frac{T_{max}-(n_{max}+1)D}{n_{max}},\ldots, T_{max}-2D \right\}, \\
 &\text{ with } n_{max} = \left\lfloor\frac{T_{max}}{D}\right\rfloor-1.
 \end{aligned}
\end{equation}
 
Still using parameter values in Table~\ref{Param} and $T_{max} = 4000$~days, we plot in Figure~\ref{Tau_singulier_Figure} profit $R$ as a function of fallow duration $\tau$. The maximizer $\tau^*=37$~days, leading to 10 fallow periods and 11 cropping seasons,  belongs to $\Xi$ as surmised. It corresponds to the average fallow period represented in Figures~\ref{solution}--\ref{Born_ad} (red line). The associated profit is $R(37)= 52000$~XAF, which is just $4.6\%$ worse than the non-regulated optimum obtained in Subsection~\ref{Subsection_high_dimension}. The final soil infestation after the last harvest is $P(T_{max}^+) = 82$ nematodes, which is much lower than for the non-regulated optimum. Figure~\ref{Tau_singulier_Figure} also shows that this optimal constant fallow is $54\%$ more profitable than no fallow ($R(0)=32150$~XAF). 

\begin{figure}[htp]
   \centering
    \includegraphics[width=.75\linewidth]{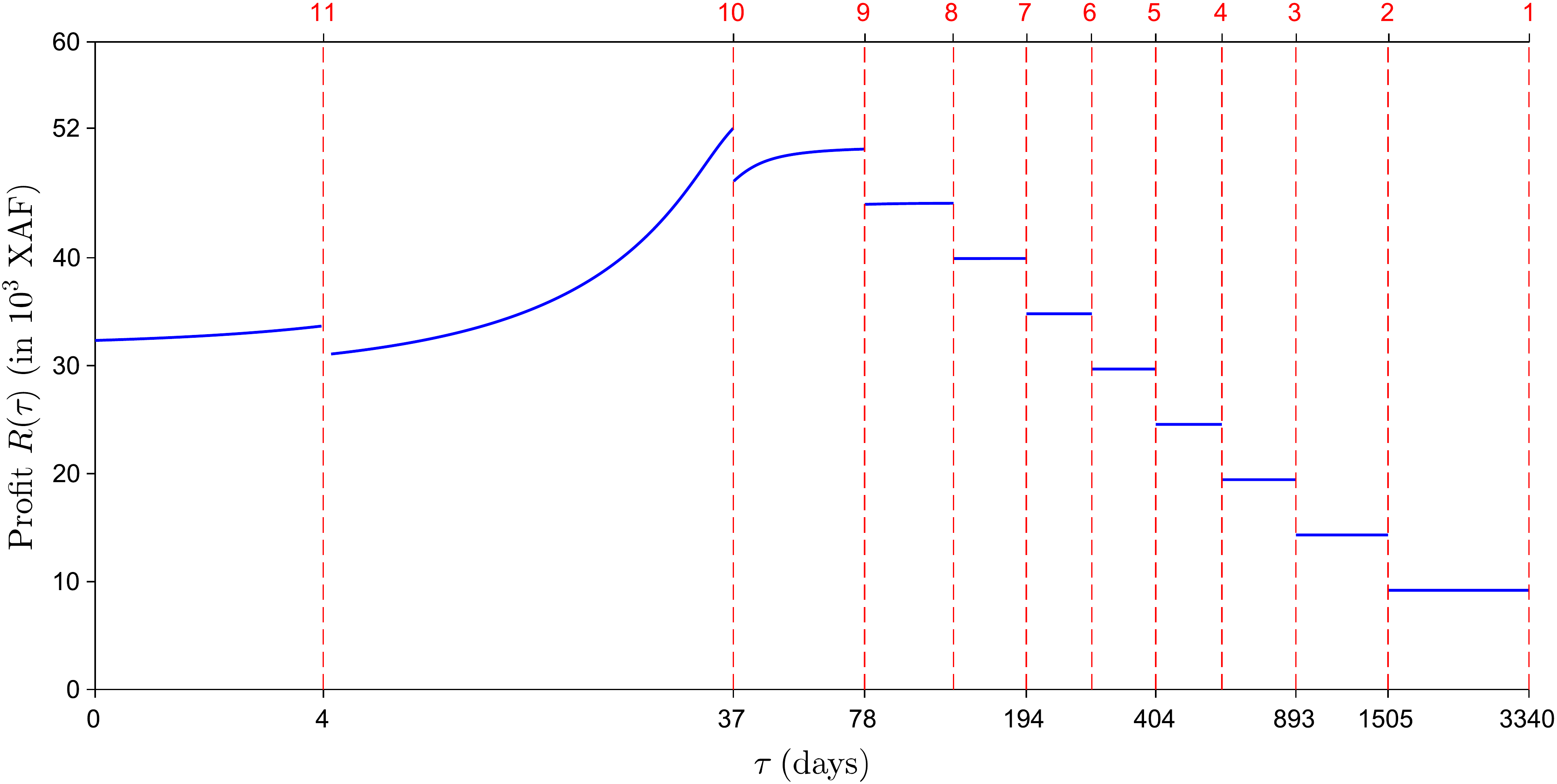}
    \caption{Profit $R$ as a function of fallow duration $\tau$ (logarithm scale) for time horizon $T_{max}=4000$~days. The set $\Xi=\{4,37,78,129,194,282,404,588,893,1505\}$ (in days), defined in equation~\eqref{definition_xi}, is represented by dashed red bars. Elements of $\Xi$ correspond to discontinuities of the profit function, when the number of fallows $n$ (upper axis) that fit in $T_{max}$ is incremented (from right to left). The maximal profit $R^*=52000$~XAF (79.27 euros) is obtained for $n=10$ fallows of duration $\tau^*=37$~days, which belongs to $\Xi$. }
    \label{Tau_singulier_Figure}
\end{figure}

Using the unrealistic parameters of the monotonicity counterexample in Figure~\ref{contreexemple}, for which Assumption~\ref{assumption_mon} is no longer valid, we can build a counterexample in which the optimal fallow period duration is not an element of set $\Xi$ defined above in equation~\eqref{definition_xi}. Indeed, setting $T_{max}=680$, only one fallow period can be deployed and $\tau=20$ is the only point of $\Xi$. However, as shown in Figure~\ref{contreexemple}(b), $\tau = 20$ does not maximize (nor minimize) the profit.

\subsection{Comparisons}

We compare the different optima obtained above when  $T_{max} =4000$~days, for the non-regulated and regulated strategies. In Figure~\ref{comp_inf}, we represent the  soil infestation after each harvest. In the most regular strategy, corresponding to constant fallows, the soil infestation follows a regular decrease over the seasons. For the other strategies, the soil infestation is first brought down, then rises again. The decrease and increase are sharper for the non-regulated strategy; in particular, the soil infestation is negligible right after the second harvest, but at its highest after the last harvest. The regulated strategies consisting in bounding or penalizing dispersed fallows induce similar soil infestations, especially after the last harvest at $T_{max}^+$. At this time, the soil infestation is much lower for constant fallows (up to three times lower).

\begin{figure}[htp]
   \centering
    \includegraphics[width=.75\linewidth]{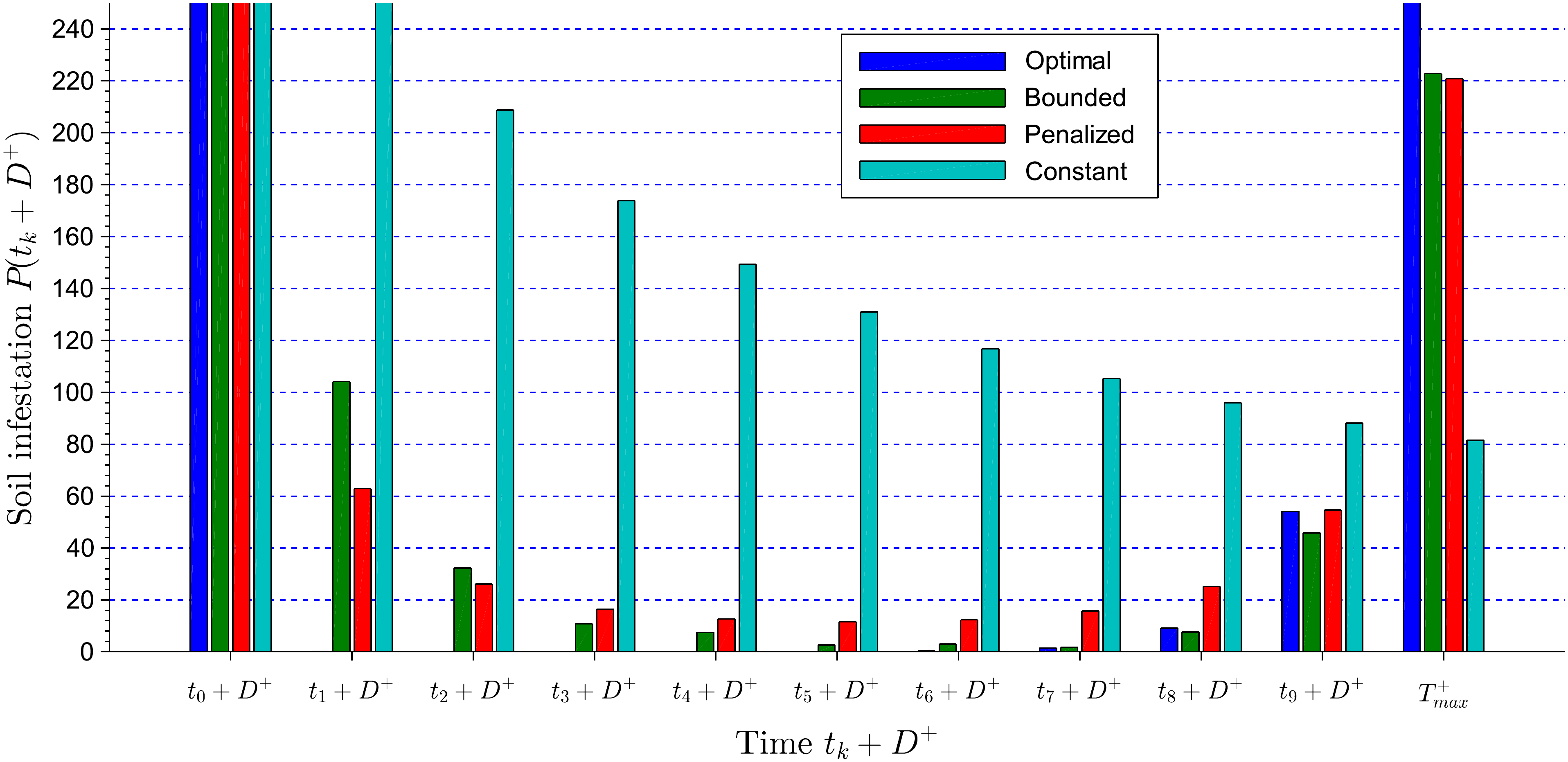}
    \caption{Soil infestation after each harvest for the optimal non-regulated and regulated fallow deployment strategies over time horizon $T_{max}=4000$~days. The non-regulated strategy (blue bars) corresponds to Figure~\ref{solution}. Regulations consist in bounding (green bars), penalizing (red bars) and setting constant (cyan bars) fallows; they correspond to Figures~\ref{Born_triv}, \ref{Born_ad} and \ref{Tau_singulier_Figure}, respectively. Soil infestations $P(t_0+D^+)$ after the first harvest are the same for all strategies, as initial conditions are the same. All strategies involve 10 fallows, but as their durations differ among strategies, times $t_k$ ($k=1,\dots,9$) also differ.}
    \label{comp_inf}
\end{figure} 
\begin{figure}[htp]
   \centering
    \includegraphics[width=.75\linewidth]{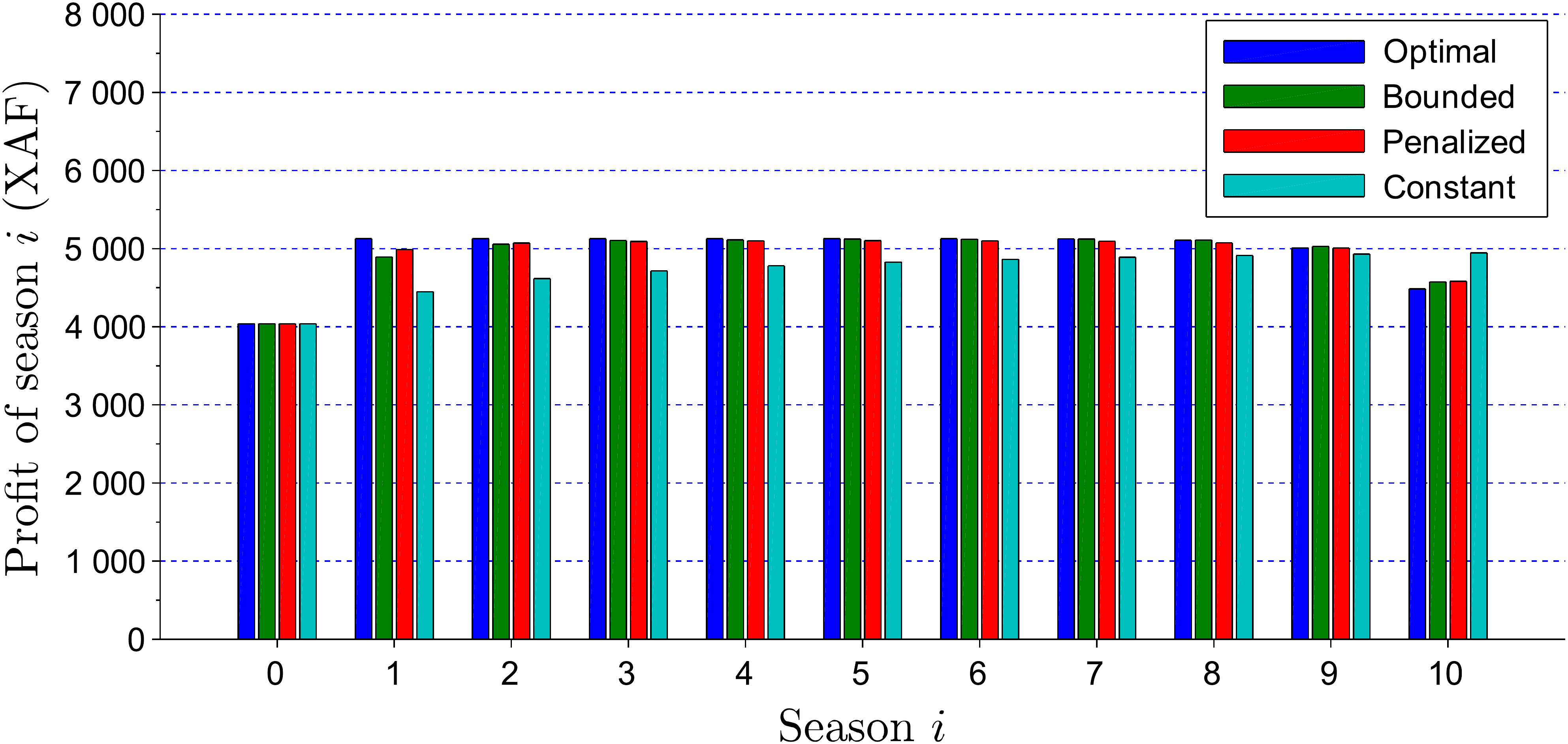}
    \caption{Seasonal profits of banana crop under different strategies of fallow deployment. for the optimal non-regulated and regulated fallow deployment strategies over time horizon $T_{max}=4000$~days. The non-regulated strategy (blue bars) corresponds to Figure~\ref{solution}. Regulations consist in bounding (green bars), penalizing (red bars) and setting constant (cyan bars) fallows; they correspond to Figures~\ref{Born_triv}, \ref{Born_ad} and \ref{Tau_singulier_Figure}, respectively. Profits of the first season are the same for all strategies, as initial conditions are the same.}
    \label{comp_prof}
\end{figure} 

The dynamical behaviour of soil infestation after each harvest is also reflected in the seasonal profits, since monotony (Assumption~\ref{assumption_mon}) makes lower infestations yield better profits. This is illustrated in Figure~\ref{comp_prof}. Seasonal profits vary much less than soil infestations. As shown above in Subsection~\ref{Section_regulation}, bounding or penalizing dispersed fallows yields total profits that very similar to the optimal with no regulation. This holds also for seasonal profits. With constant fallows, the seasonal profit increases regularly; at the last season, it is higher than the profits generated by the other strategies.

\clearpage
\section{Discussion and future work}\label{Section_discussion}

We have shown in this paper that increasing the duration of fallow periods tends to reduce the pest population. In an earlier work~\cite{TankamChedjou20}, we identified a threshold $\tau_0$ above which constant fallows lead to the disappearance of the pest asymptotically. With the  parameters in Table~\ref{Param}, this threshold is $\tau_0=36.8$~days. However, the systematic deployment of fallow periods longer than this threshold may not be optimal in terms of profit. On the one hand, such a deployment ensures that the pest declines in the long run, but in the short to medium term, quite longer fallows may be needed to significantly reduce the pest population and ensure higher seasonal profits. On the other hand, deploying long fallow periods could induce the loss of one or more cropping seasons on a given finite time horizon, which in turn could affect the total profit. Our optimization problem aimed at finding the right balance. For a time horizon of a little less than 11 years, we showed that it is preferable to deploy 11 rather than 12 cropping seasons in order to increase the total fallow time. The optimal solution consists in deploying a very long fallow after the first harvest, to drastically reduce the soil infestation, and then intermediate fallows during four more years (Figure~\ref{solution}). Pests remain relatively low until the end of the second to last cropping season, when they increase considerably (Figure~\ref{comp_inf}). The last seasonal profit hence decreases (Figure~\ref{comp_prof}), but further consequences of this optimal strategy would occur later, beyond the time horizon, which is a common issue for finite horizon optimization problems. In future work, to overcome this issue, we could penalize the final soil infestation.

In this work, we chose to tackle another issue exhibited by this optimal solution, which is  the dispersal of the fallow distribution around the average fallow duration (Figure~\ref{solution}). For several reasons, this solution may not be adopted by growers.. First, this solution implies an irregular crop calendar. The crop calendar is the schedule of cultural operations needed in crop production with respect to time, such as sowing, fertilising, harvesting. A regular schedule allows a  better planning of farm activities, including the distribution of labour. Second, another crop could be planted between banana cropping seasons instead of a fallow, provided that this inter-crop is a poor host of the pest. Otherwise, it would not help controlling the pest population. To implement such rotation, intervals between banana cropping seasons should long enough to grow the inter-crop. This is why the optimal solution was regularized by bounding fallows (Figure~\ref{Born_triv}), penalizing dispersed fallows (Figure~\ref{Born_ad}) or setting constant fallows (Figure~\ref{Tau_singulier_Figure}). This last regulation, besides being perfectly regular, leads to the lowest soil infestation after the last harvest, with only a small reduction of the total profit. Hence, the crops that are planted afterwards will benefit from a less infested soil. Constant fallows, possibly replaced by a poor host inter-crop, are therefore a good trade-off between profit and cultural constraints.

Determining such optimal fallow deployment strategies requires a good knowledge of the plant-pest interaction parameters, as well as the initial infestation. In this study, we gathered data from various published studies to inform our model parameters. Still, more quantitative experimental work on banana--nematode dynamics would help strengthen our conclusions.

There are two main limitations in our model that could lead to further developments. Firstly, we do not take into account the possible toppling of the plant. Indeed, above a certain damage level, the plant falls and the yield for that season is then totally lost \cite{Chabrier03, Chabrier05b}.  This  goes hand in hand with the monotonicity Assumption~\ref{assumption_mon} that ensures the ``good properties'' of our optimization problem. An infestation level high enough to induce the loss of monotonicity could lead to the toppling of the plant. Secondly, the use of nursery-bought healthy vitro-plants comes at a fairly high cost. Banana growers may prefer to rely on the vegetative reproduction of banana plants from lateral shots. This cultural practice does not allow for fallows between cropping seasons and is not very efficinet to control the soil infestation. A solution would be to alternate beween nursery-bought healthy suckers and vegetative reproduction. This  would lead to more complex optimal fallow deployment strategies.

\section*{Acknowledgement}
This work is supported by EPITAG, an Inria Associate team part of the LIRIMA (https://team.inria.fr/epitag/), and the EMS-Simons program for Africa. 

\bibliography{biblio}

\begin{appendix}
\section{Adaptive random search on the simplex}\label{ars}
  
The adaptive random search (ARS) algorithm consists in exploring a given bounded space, by alternating variance-selection and variance-exploitation phases \cite{Masri80,Walter97}. It is used here in Algorithm~\ref{algo} to solve maximization Problem~\ref{probleme}. It  is adapted to the simplex $A^n$ as follows. First, from a current point on the simplex, the displacement towards a new point of the simplex requires to randomly choose a direction  $\vec{d}=(d_k)_{k=1\dots n}$  such that $\sum_{k=1}^nd_k = 0$, and $||\vec{d}|| = 1$. Then, if the length of the displacement, drawn from a normal distribution $\mathcal{N}(0,\sigma)$,  is too large and such that the new point falls the limit of the simplex, this point is discarded and another displacement is drawn randomly.

The ARS algorithm, adapted to the $n$-simplex $A^n$, is described below. It aims at determining the optimal fallow distribution $\text{ARS}(n)=\vec\tau^{n,*}=(\tau_1^*,\dots,\tau_n^*)$ that maximizes the profit $R$ defined in equation~\eqref{profit} for a given number of fallows~$n$.

\subsubsection*{(Initialization)}
\begin{itemize}
\item[Step 1 --] Start as the center of the simplex:
  \begin{equation*}
    \vec\tau^{n,*} := \left[T_{max}-(n+1)D\right] \left(\frac{1}{n}, \dots,\frac{1}{n}\right)
  \end{equation*}
  and  initialize the standard deviation at the ``size'' of the simplex:\\ $\sigma^*= \sigma^0:=T_{max}-(n+1)D$. 
\end{itemize}

\subsubsection*{(Variance-selection)}
It aims at finding the best standard deviation $\sigma^*$.
\begin{itemize}
\item[Step 2 --] 5 decreasing standard deviations $\sigma^{i\in\{1,\dots,5\}} < \sigma^0 $ are chosen. For each standard deviation, $2\times n^2$ fallow distributions are drawn randomly in the simplex and their  profit is evaluated. The best standard deviation, selected for the next step, is the one corresponding to the highest profit. 
\end{itemize}
\begin{algorithm}[H]
  $\vec\tau_{sel} := \vec\tau^{n,*}$ \\
  \For{i := 1 to 5}{
    $\sigma^i := 0.3 \times \sigma^{i-1}$ \\
    \For{j := 1 to $2\times n^2$}{
      Draw $\vec{d}^{j}$  (\underline{cf.\ below})\\
      Draw $r^j\sim\mathcal{N}(0,\sigma^i)$\\
      $\vec\tau^j := \vec\tau_{sel} + r^j \vec{d}^j$ \\
      \While{$\vec\tau^j$ is outside of the simplex}{
        Draw $r^j\sim\mathcal{N}(0,\sigma^i)$ \\
        $\vec\tau^j := \vec\tau_{sel} + r^j \vec{d}^j$
      }
      \If{$R(\vec\tau^j) > R(\vec\tau^{n,*})$}{
        $\vec\tau^{n,*} := \vec\tau^j$ and $\sigma^* := \sigma^i$
      }
    }
  }
\end{algorithm}

\medskip
\underline{$\vec{d}^j$ draw}:
\begin{enumerate}
\item $\vec{d}^j \sim \mathcal{U} \big( [0,1]^n \big)$;
\item project $\vec{d}^j $ on the hyperplane $H = \{ (d_k) \in \R^n | \sum_{k=1}^n d_k = 0 \}$;
\item normalize $\vec{d}^j$.
\end{enumerate}

\subsubsection*{(Variance-exploitation)}
It aims at finding the best fallow distribution $\vec\tau^{n,*}$.
\begin{itemize}
\item[Step 3 --] 
$5\times n^2$ fallow distributions are drawn randomly in the simplex, using the best standard deviation $\sigma^*$ selected from the previous variance-selection phase, and their profit is evaluated. The best fallow distribution is the one with the highest profit.
\end{itemize}
\begin{algorithm}[H]
  \For{j := 1 to $5\times n^2$}{
    Draw $\vec{d}^{j}$ (\underline{cf.\ above})\\
    Draw $r^j\sim\mathcal{N}(0,\sigma^*)$\\
    $\vec\tau^j := \vec\tau^{n,*} + r^j \times \vec{d}$ \\  
    \While{$\vec\tau$ is outside of the simplex}{
      Draw $r^j\sim\mathcal{N}(0,\sigma^*)$\\
      $\vec\tau^j := \vec\tau^{n,*} + r^j \vec{d}^j$
    }
    \If{$R(\vec\tau^j) > R(\vec\tau^{n,*})$}{
      $\vec\tau^{n,*} := \vec\tau^j$
    } 
  }
\end{algorithm}

\subsubsection*{(Stopping criteria)}

Steps 2 and 3 are repeated until one of the following stopping criteria is achieved: 
\begin{itemize}
\item The smallest standard deviation $\sigma^5$ is used in more than 4 successive variance-exploitation phases.
\item The optimum is not improved in more than 4 successive variance-exploitation phases.
\item The profit is evaluated more than $100\times n^2$ times.
\end{itemize}

\end{appendix}

\end{document}